\crefname{theorem}{Theorem}{Theorems}
\crefname{thm}{Theorem}{Theorems}
\crefname{lemma}{Lemma}{Lemmas}
\crefname{lem}{Lemma}{Lemmas}
\crefname{remark}{Remark}{Remarks}
\crefname{prop}{Proposition}{Propositions}
\crefname{defn}{Definition}{Definitions}
\crefname{corollary}{Corollary}{Corollaries}
\crefname{conjecture}{Conjecture}{Conjectures}
\crefname{question}{Question}{Questions}
\crefname{chapter}{Chapter}{Chapters}
\crefname{section}{Section}{Sections}
\crefname{figure}{Figure}{Figures}
\theoremstyle{plain}
\newtheorem{thm}{Theorem}[section]
\newtheorem{theorem}[thm]{Theorem}
\newtheorem{lemma}[thm]{Lemma}
\newtheorem{prop}[thm]{Proposition}
\newtheorem{proposition}[thm]{Proposition}
\newtheorem{conjecture}[thm]{Conjecture}
\theoremstyle{definition}
\theoremstyle{remark}
\numberwithin{equation}{section}
\renewcommand{\P}{\mathbb P}
\newcommand{\R}{\mathbb R}
\newcommand{\Z}{\mathbb Z}
\newcommand{\N}{\mathbb N}
\newcommand{\cO}{\mathcal O}
\newcommand{\sA}{\mathscr A}
\newcommand{\bbD}{\mathbb D}
\newcommand{\bbG}{\mathbb G}
\newcommand{\bbH}{\mathbb H}
\newcommand{\bbT}{\mathbb T}
\newcommand{\bbV}{\mathbb V}
\newcommand{\eps}{\varepsilon}
\title{\textsc{Counterexamples for percolation on unimodular random graphs}}
\author{Omer Angel \and Tom Hutchcroft}
\begin{document}
             \maketitle

\begin{abstract}
We construct an example of a bounded degree, nonamenable, unimodular random rooted graph with $p_c=p_u$ for Bernoulli bond percolation, as well as an example of a bounded degree, unimodular random rooted graph with $p_c<1$ but with an infinite cluster at criticality. These examples show that two well-known conjectures of Benjamini and Schramm are false when generalised from transitive graphs to unimodular random rooted graphs. 
\end{abstract}

\section{Introduction}

In \textbf{Bernoulli bond percolation}, each edge of a connected, locally finite graph $G$ is chosen to be deleted randomly with probability $1-p$, independently of all other edges, to obtain a random subgraph $G[p]$ of $G$. When $G$ is infinite, the \textbf{critical parameter} is defined to be
% When $G$ is infinite, we are often interested in the geometry of $G[p]$ at and near the \textbf{critical parameter}
\[
p_c(G) = \inf\{ p\in [0,1] : G[p] \text{ contains an infinite connected component almost surely}\}
\]
and the \textbf{uniqueness threshold} is defined to be
\[
p_u(G) = \inf\{p \in [0,1] : G[p] \text{ contains a unique infinite connected component almost surely}\}.
\]
Traditionally, percolation was studied primarily on the hypercubic lattice $\Z^d$ and other Euclidean lattices.
 % and it remains a major open problem to establish that critical percolation on $\Z^d$ does not have an infinite connected component almost surely for every $d\geq 2$. 
% 
In their seminal paper \cite{bperc96}, Benjamini and Schramm proposed a systematic study of percolation on more general graphs, and
 posed many questions. They were particularly interested in \textbf{quasi-transitive} graphs, that is, graphs whose automorphism groups have only finitely many orbits.
Two central questions concern the existence or non-existence of infinite clusters at $p_c$, and the equality or inequality of $p_c$ and $p_u$. 
 They made the following conjectures. Specific instances of these conjectures, such as those concerning $\Z^d$, are much older.
 % They made the following conjectures. 

% In particular, we have the following conjectures of Benjamini and Schramm \cite{bperc96}; specific, still open, instances of these conjectures are much older. 

\begin{conjecture}
\label{conj1}
Let $G$ be a quasi-transitive graph, and suppose that $p_c(G)<1$. Then $G[p_c]$ does not contain an infinite cluster almost surely.
\end{conjecture}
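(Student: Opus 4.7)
The plan is to split into the amenable and nonamenable cases, since the two regimes demand very different tools. In each case, the target is to rule out the existence of a $p_c$-cluster by showing that the expected cluster size $\chi(p) = \E|K_o|$ obeys a sufficiently strong lower bound as $p \uparrow p_c$.

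For the nonamenable case, I would try to run the differential-inequality strategy of Aizenman--Barsky together with the magnetization approach refined by Schramm and later by Hutchcroft. Introduce $M(p,h) = \E\bigl[1-(1-h)^{|K_o|}\bigr]$ and establish the two inequalities
\[
M \le h \frac{\partial M}{\partial h} + M^2 \frac{\partial M}{\partial p}, \qquad \frac{\partial M}{\partial p} \le C \cdot \frac{\partial M}{\partial h}\Bigl(\text{triangle-type correction}\Bigr),
\]
so that integrating the resulting ODE along $h \downarrow 0$ forces the mean-field bound $\chi(p_c - \eps) \gtrsim \eps^{-1}$. The nonamenability enters through a spectral/isoperimetric gap that lets one dominate the relevant two-arm and three-arm expectations by products of two-point functions. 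Once one has the mean-field lower bound on $\chi$, a standard tail argument rules out an infinite cluster at $p_c$.

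For the amenable quasi-transitive case I would combine the Aizenman--Kesten--Newman uniqueness of the infinite cluster (via Burton--Keane) with a contradiction argument: if $\theta(p_c) > 0$, then the unique infinite cluster at $p_c$ would have enough density to enforce supercritical behaviour at $p_c - \eps$ via a finite-energy/modification argument, violating the definition of $p_c$. This is the scheme that works for $\Z^2$ via Harris symmetry and RSW (through a planar-duality shortcut), and for $\Z^d$ with $d$ large via the Hara--Slade lace expansion after verifying the triangle condition.

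The main obstacle is the intermediate-dimensional regime: for $\Z^d$ with $3 \le d \le 10$ and for general amenable quasi-transitive graphs without planar duality or a small parameter to expand in, neither of the above strategies is known to apply, and the conjecture is genuinely open. Any serious attack in this regime would seem to require a new scale-invariant comparison between $p_c$ and $p_c - \eps$, or a direct structural argument ruling out critical infinite clusters from the unimodularity and amenability alone. (It is precisely this last hope that the present paper destroys at the level of unimodular random rooted graphs, so any proof of Conjecture~\ref{conj1} must use quasi-transitivity in an essential way.)
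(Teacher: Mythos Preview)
This statement is a \emph{conjecture}, not a theorem: the paper does not prove it, and indeed explicitly presents it as open. There is therefore no ``paper's own proof'' to compare your proposal against. What the paper \emph{does} do is prove Theorem~\ref{thm:critical}, which shows that the natural extension of Conjecture~\ref{conj1} to unimodular random rooted graphs is false; this is a counterexample, not a proof of the conjecture.

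Your proposal is not a proof but a survey of partial results and heuristics, and you yourself acknowledge as much: you note that the amenable intermediate-dimensional case (e.g.\ $\Z^d$ for $3\le d\le 10$) is genuinely open and that ``any serious attack in this regime would seem to require a new scale-invariant comparison\ldots or a direct structural argument.'' That is an accurate assessment of the state of the art, but it means the proposal has a genuine and explicit gap exactly where the hard content of the conjecture lies. The nonamenable case you sketch is now a theorem (via \cite{Hutchcroft2016944} and related work), and the amenable cases you mention ($\Z^2$ via RSW, high $d$ via lace expansion) are also known; but stitching these together does not yield a proof of the full conjecture, and no such proof currently exists. Your closing parenthetical is exactly the moral the paper is drawing: the counterexample of Theorem~\ref{thm:critical} shows that mass-transport/unimodularity alone cannot suffice, so quasi-transitivity must be used in some essential way that nobody yet knows how to exploit.
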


\begin{conjecture}
\label{conj2}
Let $G$ be a quasi-transitive graph. Then $p_c(G)<p_u(G)$ if and only if $G$ is nonamenable.
\end{conjecture}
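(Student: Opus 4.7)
The statement splits into two implications. The ``if'' direction, that amenability implies $p_c = p_u$, follows from the Burton--Keane uniqueness theorem. On any amenable quasi-transitive graph, the trifurcation argument combined with the F{\o}lner condition shows that for every $p \in [0,1]$, Bernoulli$(p)$ bond percolation has at most one infinite cluster almost surely. Hence whenever there is an infinite cluster it is unique, giving $p_u \leq p_c$ and therefore $p_u = p_c$; I would dispatch this direction first by quoting Burton--Keane.

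For the ``only if'' direction, that nonamenability implies $p_c < p_u$, the plan is to exhibit some $p$ at which infinitely many infinite clusters coexist almost surely. By the Newman--Schulman zero-one law, the number of infinite clusters at each $p$ is almost surely in $\{0,1,\infty\}$, and by the H{\"a}ggstr{\"o}m--Peres uniqueness monotonicity theorem, the set of $p$ with infinitely many infinite clusters is an interval $(p_c, p_u)$. The task is thus to show this interval is nonempty when $G$ is nonamenable.

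A natural strategy is to use the positive edge-isoperimetric constant $h(G) > 0$ to force exponential growth of supercritical clusters, and then apply a second-moment or perturbative argument just above $p_c$ to show that two sufficiently distant vertices cannot typically lie in the same infinite cluster. A complementary spectral approach exploits the fact that nonamenability yields spectral radius strictly less than one, which should allow quantitative two-point function estimates that rule out uniqueness for $p$ slightly above $p_c$.

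The main obstacle is that this direction of the conjecture is still open in general, and every known partial result uses extra structure beyond nonamenability: Benjamini--Lyons--Peres--Schramm handle Cayley graphs with Kazhdan's Property (T) or positive first $\ell^2$-Betti number; Pak and Smirnova-Nagnibeda treat finitely generated nonamenable groups up to a suitable choice of generating set; Benjamini--Schramm address nonamenable planar graphs. I do not see a way to reduce the general quasi-transitive case to one of these, so I would expect the truly hard part to be identifying a new mechanism that enforces nonuniqueness using only the bounded-degree and nonamenability hypotheses.
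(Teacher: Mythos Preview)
This statement is a \emph{conjecture} in the paper, not a theorem; the paper does not prove it, and indeed explicitly remarks that one direction remains open. So there is no proof in the paper against which to compare your proposal. Your substantive assessment is correct: the implication ``amenable $\Rightarrow p_c=p_u$'' is known via Burton--Keane (together with Gandolfi--Keane--Newman for the quasi-transitive setting), while ``nonamenable $\Rightarrow p_c<p_u$'' is open in general, with only the partial results you list (Property~(T), positive first $\ell^2$-Betti number, Pak--Smirnova-Nagnibeda, planar cases) available. Your honest acknowledgement that the hard direction lacks a general proof is exactly the right status report.

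One correction: you have the ``if'' and ``only if'' labels reversed. In the biconditional ``$p_c<p_u$ if and only if $G$ is nonamenable'', the \emph{if} direction is ``nonamenable $\Rightarrow p_c<p_u$'' (the open one), and the \emph{only if} direction is ``$p_c<p_u \Rightarrow$ nonamenable'', whose contrapositive is the Burton--Keane statement. The paper follows this convention when it writes ``only the `if' direction of Conjecture~\ref{conj2} remains open.''

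It may also help to note what the paper actually does with this conjecture: rather than attacking it, Theorem~\ref{thm:pcpu} constructs a bounded-degree, nonamenable, unimodular random rooted graph with $p_c=p_u$, showing that the conjecture \emph{fails} once one passes from quasi-transitive graphs to unimodular random rooted graphs. In particular, any eventual proof of the open direction for quasi-transitive graphs must use more than mass-transport arguments --- which is consistent with your observation that every known partial result invokes structure beyond bare nonamenability.
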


Given a set $K$ of vertices in a graph $G$, we define $\partial_E K$ to be the set of edges of $G$ that have exactly one endpoint in $K$. 
A graph is said to be \textbf{nonamenable} if 
\[
\inf\left\{ \frac{|\partial_E K|}{\sum_{v\in K} \deg(v)} : K \subseteq V \text{ finite}\right\}>0,
\]
and \textbf{amenable} otherwise. It follows from the work of Burton and Keane \cite{burton1989density} and Gandolfi, Keane and Newman \cite{gandolfi1992uniqueness} that $p_c(G)=p_u(G)$ for every amenable quasi-transitive graph, so that only the `if' direction of Conjecture \ref{conj2} remains open. It was also proven by H\"aggstr\"om, Peres, and Schonmann \cite{MR1676835,MR1676831,haggstrom1999percolation} that there is a unique infinite cluster for every $p>p_u$ when $G$ is quasi-transitive.
We refer the reader to \cite{grimmett2010percolation} for an account of what is known in the Euclidean case $G=\Z^d$, and to \cite{LP:book} for percolation on more general graphs.

% Traditionally, percolation was studied primarily on the hypercubic lattice $\Z^d$ and other Euclidean lattices, and it remains a major open problem to establish that critical percolation on $\Z^d$ does not have an infinite connected component almost surely for every $d\geq 2$. 

% In their seminal paper \cite{bperc96}, Benjamini and Schramm proposed a systematic study of percolation on more general graphs, and posed many questions. They were particularly interested in \textbf{quasi-transitive} graphs, that is, graphs whose automorphism groups have only finitely many orbits. In particular, they made the following conjecture.

Substantial progress on Conjecture \ref{conj1} was made in 1999 by Benjamini, Lyons, Peres, and Schramm~\cite{BLPS99b}, who proved that the conjecture is true for any \emph{nonamenable, unimodular} quasi-transitive graph. Here, a graph is unimodular if it satisfies the \emph{mass-transport principle}, see \cite[Chapter 8]{LP:book}. (More recently, the conjecture has been verified for all quasi-transitive graphs of \emph{exponential growth} \cite{timar2006percolation,Hutchcroft2016944}, and in particular for all nonamenable quasi-transitive graphs, without the assumption of unimodularity.)
% 
 % Major progress was also made on several on related problems in the theory of percolation on nonamenable graphs by H\"aggstr\"om, Peres, and Schonmann \cite{HaggPe99}, and Lyons and Schramm \cite{LS99}. 
% 
% 
In the mid 2000's, Aldous and Lyons \cite{AL07} showed that this result, as well as several other important results such as those of \cite{newman1981infinite,burton1989density,gandolfi1992uniqueness,BLS99,MR1676835,MR1676831,haggstrom1999percolation,LS99} 
% much of the theory of percolation on unimodular transitive graphs 
can be generalized, with minimal changes to the proofs, 
 to \emph{unimodular random rooted graphs}.
These graphs appear naturally in many applications: For example, the connected component at the origin in percolation on a unimodular transitive graph is itself a unimodular random rooted graph. 
An important caveat is that when working with unimodular random rooted graphs one should consider a different, weaker notion of nonamenability than the classical one, which we call \emph{invariant nonamenability} \cite[\S 8]{AL07}.

 % generalised\cref
% Substantial progress on Conjecture \ref{conj1}, along with many other problems in \cite{bperc96}, was made 
% Several other questions from \cite{bperc96} were solved
 % during a flurry of activity in the late 1990's and early 2000's; highlights include \cite{BLPS99b,LS99}

In this note, we construct examples to show that, in contrast to the situation for the classical results mentioned in the previous paragraph, Conjectures \ref{conj1} and \ref{conj2} are in fact both \emph{false} when generalized to unimodular random rooted graphs, even with the assumption of bounded degrees.

\begin{theorem}
\label{thm:critical}
There exists a bounded degree unimodular random rooted graph $(G,\rho)$ such that $p_c(G)<1$ but there is an infinite cluster $G[p_c]$ almost surely.
\end{theorem}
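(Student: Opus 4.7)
The plan is to construct $(G,\rho)$ by attaching independent copies of a carefully chosen infinite bounded-degree tree $H$ to each vertex of a spine $\Z$, and showing that $p_c(G)=p_c(H)$ while $p_c$-percolation on each attached copy of $H$ produces an infinite cluster with positive probability.

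The key ingredient is an infinite bounded-degree rooted tree $H$ with $p_c(H)<1$ and $\theta_H(p_c(H))>0$, where $\theta_H(p)$ denotes the probability that the root's cluster is infinite under Bernoulli$(p)$ percolation on $H$. I take $H$ to be the spherically symmetric tree in which a vertex at depth $n$ has $f(n)\in\{2,3\}$ children, with $f(n)=3$ precisely when $n$ is a perfect square and $f(n)=2$ otherwise. A first-moment computation gives $\E[Z_n]=(2p)^n(3/2)^{\lfloor\sqrt n\rfloor}$, so $p_c(H)=1/2$ and at $p=1/2$ we have $\E[Z_n]=(3/2)^{\lfloor\sqrt n\rfloor}\to\infty$. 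A standard second-moment computation for a varying-environment branching process with offspring means in $\{1,3/2\}$ gives $\E[Z_n^2]=O(\E[Z_n]^2)$ uniformly in $n$, so Paley--Zygmund yields $\theta_H(1/2)>0$.

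Now let $G$ be the graph obtained from $\Z$ by attaching an independent copy of $H$ to each integer, with each $H$-copy's root identified with the corresponding integer. Choose $\rho$ by specifying a probability distribution $(q_k)_{k\ge 0}$ on orbit types (spine, or depth-$k$ in an attached tree) and letting $\rho$ be a canonical representative of the chosen orbit; $\Z$-translation invariance then makes $(G,\rho)$ into a bounded-degree unimodular random rooted graph (the mass transport principle is verified by a routine transport along the translations of the spine). Since $H$ sits inside $G$ as a subgraph, $p_c(G)\le p_c(H)=1/2$. Conversely, for $p<1/2$ each attached $H$-copy is subcritical and has only finite clusters (subcritical percolation on a tree dies out almost surely), while $\Z[p]$ breaks into finite segments since $p_c(\Z)=1$; every cluster of $G[p]$ is therefore a finite union of finite pieces and hence finite a.s., which shows $p_c(G)\ge 1/2$. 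At $p=p_c(G)=1/2$, the countably many attached copies of $H$ are percolated independently (they share no edges, only the identified spine vertices), and each contains an infinite cluster with probability $\theta_H(1/2)>0$; Borel--Cantelli for independent events then forces almost surely infinitely many of them to contain infinite clusters, so $G[p_c]$ has an infinite cluster a.s.

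The main obstacle is verifying $\theta_H(p_c(H))>0$ for a concrete bounded-degree spherically symmetric $H$: while second-moment estimates for varying-environment branching processes are classical, one must choose the growth pattern carefully so that $\E[Z_n]\to\infty$ at $p=p_c(H)$ while keeping the variance under control. Once this is established, the remaining steps -- checking unimodularity, bounding $p_c(G)$ on both sides, and deploying Borel--Cantelli at criticality -- are routine.
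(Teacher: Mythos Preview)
Your tree $H$ is fine: the second-moment computation does give $\theta_H(1/2)>0$, since $\sum_k (3/2)^{-\sqrt k}<\infty$. The gap is in Step~2: the graph $G$ you build by attaching copies of $H$ to the vertices of $\Z$ is \emph{not normalizable}, so no choice of root law $(q_k)_{k\ge 0}$ makes $(G,\rho)$ a unimodular random rooted graph. To see this, apply the mass-transport principle to $F(u,v)=\mathbbm{1}[v\text{ is the parent of }u\text{ in an attached copy of }H]$. A vertex at depth $k\ge 1$ sends mass $1$; a vertex at depth $k-1$ receives mass $f(k-1)$ from its children. The MTP therefore forces $q_k=q_{k-1}f(k-1)$, and iterating gives
\[
q_k \;=\; q_0\prod_{j=0}^{k-1}f(j)\;\asymp\; q_0\,2^{k}(3/2)^{\sqrt{k}}\ \longrightarrow\ \infty,
\]
so $\sum_k q_k=\infty$. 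Translation invariance along the spine is not enough; unimodularity must hold for \emph{all} transports, including those that move mass within the attached trees, and these fail. The same obstruction rules out using $H$ by itself as the example.

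This is exactly why the paper's construction runs in the opposite direction. It starts from the $d$-ary canopy tree, whose orbit at height $\ell$ carries mass $\asymp d^{-\ell}$ (the graph \emph{narrows} as one moves away from the leaves, so the total mass is finite), and then \emph{adds} structure at higher levels---parallel edges in the unbounded-degree version, or tori and long paths in the bounded-degree version---so that percolation at the designed value of $p_c$ becomes more and more likely to propagate from level $\ell$ to level $\ell+1$ as $\ell$ grows, and Borel--Cantelli finishes. The ``extra survival'' at criticality is engineered by thickening the graph in the direction in which the unimodular mass \emph{decreases}; any attempt to produce it via an outward-branching tree will hit the same non-normalizability obstruction you ran into.
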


\begin{theorem}
\label{thm:pcpu}
There exists a unimodular random rooted graph $(G,\rho)$ such that $G$ has bounded degrees, is nonamenable, and has $p_c(G)=p_u(G)$ for Bernoulli bond percolation almost surely.
\end{theorem}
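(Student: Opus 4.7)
The plan is to construct a specific bounded-degree unimodular random rooted graph $(G,\rho)$, verify that $G$ is almost surely nonamenable in the classical graph-theoretic sense, and then establish $p_c(G)=p_u(G)$ almost surely by a direct connectivity argument rather than by any amenability-based uniqueness theorem. The conceptual subtlety is that once $G$ is classically nonamenable almost surely, the URRG is automatically invariantly nonamenable in the Aldous--Lyons sense (any finitary invariant percolation has component edge-boundary bounded below by the almost-sure Cheeger constant of $G$), so $p_c=p_u$ cannot be obtained from any Burton--Keane style extension such as those surveyed before \cref{thm:critical}; it must instead be squeezed out of the random structure of $G$ itself.

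For the construction I would begin with a deterministic bounded-degree nonamenable skeleton $H$, for instance the $3$-regular tree $T_3$, and enrich it by an invariant random set of ``shortcut'' edges $M$ to form $G=H\cup M$. A natural way to arrange $M$ is to choose a symmetric translation-invariant random matching of vertices of $H$ and add an edge across each matched pair; this would give bounded degree (each vertex gains at most one extra edge), classical nonamenability (adding edges cannot decrease the Cheeger constant of $H$), and unimodularity via the mass-transport principle, which is verified using the symmetry of the matching law. To make the uniqueness argument below work, the matching should be spread out enough that shortcuts span arbitrarily distant pairs of vertices with positive density in an invariant way.

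The central step is proving $p_c(G)=p_u(G)$. The intended mechanism is that the random shortcuts force uniqueness of the infinite cluster at every supercritical $p$: if two disjoint infinite clusters $C_1,C_2$ of $G[p]$ existed with positive probability, then translation invariance and ergodicity would force the existence, almost surely, of infinitely many shortcut edges with one endpoint in $C_1$ and the other in $C_2$; since each such shortcut is retained independently in $G[p]$ with probability $p>0$, a Borel--Cantelli style argument would give an almost-sure merger of $C_1$ and $C_2$, contradicting their disjointness. Upgrading this sketch to a proof requires carefully choosing the law of $M$ so that the merging step applies at \emph{every} $p>p_c$, and then using the mass-transport principle to turn positive-probability merging into an almost-sure statement about the number of infinite clusters.

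The main obstacle is a fundamental tension built into the construction: shortcuts dense enough to force uniqueness at every $p>p_c$ tend to push the graph towards amenability, while shortcuts sparse enough to preserve classical nonamenability of the skeleton may not supply enough merging edges to collapse all pairs of infinite clusters. The critical task is therefore to identify an intermediate regime for the shortcut matching in which all three requirements---classical nonamenability, bounded degree, and the merging argument at every supercritical $p$---hold simultaneously, subject to the additional unimodularity constraint. Finding such a regime, and verifying these three properties for it, is the heart of the argument and the main technical difficulty.
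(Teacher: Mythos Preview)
Your proposal has a genuine gap at the ``central step.'' The Borel--Cantelli merging argument is circular: the shortcut edges in $M$ are edges of $G$, so their open/closed status is part of the percolation configuration that \emph{defines} the clusters $C_1$ and $C_2$ in the first place. You cannot condition on the existence of two disjoint infinite clusters and then treat the shortcuts between them as fresh Bernoulli variables --- if such a shortcut were open, $C_1$ and $C_2$ would already be the same cluster. The same reasoning, applied verbatim to the edges of $T_3$ itself, would ``prove'' uniqueness on the bare tree, which is of course false. Having infinitely many closed edges between two infinite clusters is precisely what occurs throughout the non-uniqueness phase; it is not a contradiction.

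More structurally, a single invariant matching on $T_3$ is a bounded perturbation, and there is no mechanism in your sketch that would collapse the entire interval $(p_c,p_u)$ of the tree. Your closing paragraph correctly identifies the tension between ``enough shortcuts to force uniqueness'' and ``few enough to remain nonamenable,'' but then stops; as written the proposal is a statement of the difficulty rather than a resolution of it.

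The paper's construction works by a completely different mechanism. Rather than attempting to force uniqueness via extra edges, it embeds many disjoint copies of an \emph{invariantly amenable} graph $\bbG$ (a canopy tree of tori, for which $p_c(\bbG)=p_u(\bbG)=q$ by the Aldous--Lyons version of Burton--Keane) inside a larger graph $\tilde\bbH$, and connects these copies by edges coming from a $4$-regular tree $S$ together with a hierarchical partition of $S$. The $S$-edges supply classical nonamenability but are stretched into long paths so that they cannot lower $p_c$ below $q$; this is the technical heart, proved by a susceptibility bound via the BK inequality (\cref{lem:C} and the induction \eqref{eq:induction2}). For $p>q$ each embedded copy of $\bbG$ already has a unique infinite cluster, and indistinguishability forces these to coalesce in $\tilde\bbH$, giving $p_u(\tilde\bbH)\le q$. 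The slogan is ``nonamenability from one factor, uniqueness inherited from an amenable factor,'' not ``uniqueness by random shortcuts.''
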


% Note that the example in \cref{thm:pcpu} is 
We stress that the example in Theorem \ref{thm:pcpu} is nonamenable in the classical sense (which is a stronger property than being invariantly nonamenable).
Thus, any successful approach to Conjectures 1 and 2 cannot rely solely on mass-transport arguments.  See \cite{beringer2016percolation} for some further examples of unimodular random rooted graphs with unusual properties for percolation, and \cite{1105.2638} for another related example.

\section{Basic constructions}

\subsection{Unimodularity and normalizability of unrooted graphs}

% A random rooted graph $(G,\rho)$ is said to be \textbf{unimodular} if 
% \[
% \E\left[\sum_{v\in V} f(\rho,v,G) \right] = \E\left[ \sum_{u\in V} f(u,\rho,G)\right]
% \]
% for every mass-transport $f$. 

We assume that the reader is familiar with the basic notions of unimodular random rooted graphs, referring them to \cite{AL07} otherwise. Since it will be important to us and is perhaps less widely known, we quickly recall the theory of unimodular random rooted graphs with fixed underlying graph from \cite[Section 3]{AL07}.

Let $G$ be a graph, let $\Gamma \subseteq \operatorname{Aut}(G)$ be a group of automorphisms of $G$, and for each $v\in  V$ let $\operatorname{Stab}_v = \{\gamma \in \Gamma : \gamma v = v\}$ be the stabilizer of $v$ in $\Gamma$.  The group $\Gamma$ is said to be \textbf{unimodular} if 
\[
|\operatorname{Stab}_v \gamma v|=|\operatorname{Stab}_{\gamma v} v|
\]
for every $v \in V$ and $\gamma \in \Gamma$, where $\operatorname{Stab}_v u$ is the orbit of $u$ under  $\operatorname{Stab}_v$.  The graph $G$ is said to be unimodular if $\operatorname{Aut}(G)$ is unimodular. 
Let $G$ be a connected, locally finite, unimodular graph and let $\cO$ be a set of \textbf{orbit representatives} of $\Gamma$. That is, $\cO \subseteq V$ is such that for every vertex $v\in V$, there exists exactly one vertex $o \in \cO$ such that $\gamma v = o$ for some $\gamma \in \Gamma$. 
We say that $(G,\Gamma)$ is \textbf{normalizable} if there exists a measure $\mu_G$ on $\cO$ such that if $\rho$ is distributed according to $\mu_G$ then the random rooted graph $(G,\rho)$ is unimodular. It is easily seen that the measure $\mu_G$ is unique when it exists.
 % that is supported on graphs isomorphic to $G$. 

It is proven in \cite[Theorem 3.1]{AL07} that a connected, locally finite, unimodular graph $G$ is normalizable if and only if
\[
Z_v(G)=\sum_{ o \in \cO} |\operatorname{Stab}_o(v)|^{-1} < \infty
\]
for some (and hence every) vertex $v \in V$, and moreover the measure $\mu_G$ can be expressed as
\[
\mu_G(\{o\})= Z_v(G)^{-1}|\operatorname{Stab}_o(v)|^{-1} \qquad o \in \cO.
\]
% then the resulting random rooted graph $(G,\rho)$ is unimodular. Moreover, the distribution of $(G,\rho)$ does not depend on the choice of $\cO$ and $v$, and it is the only unimodular random rooted graph that is almost surely isomorphic to $G$. 

\subsection{Building new examples from old via replacement}

We will frequently make use of the following construction, which allows us to construct one normalizable unimodular graph from another. 
Constructions of this form are well-known, see \cite{khezeli2017shift} and \cite{beringer2016percolation} for further background.

 Let $G=(V,E)$ be a connected, locally finite graph, let $\Gamma \subseteq \operatorname{Aut}(G)$ be a unimodular subgroup of automorphisms, and let $G'=(V',E')$ be a connected, locally finite graph. Let $M_1(V')$ be the set of functions $m: V' \to [0,1]$ with $|m|:=\sum_{v\in V'}m(v)<\infty$, and suppose that there exists a function $m:V\to M(V')$, $m : v \mapsto m_v$ such that
\begin{enumerate}
\item The functions $\{m_v : v\in V\}$ are a partition of unity on $V'$ in the sense that $\sum_{v\in V} m_v(u) = 1$ for every $u \in V'$.
 % and $m(u) \cap m(v) = \emptyset$ if $u\neq v$.
\end{enumerate}
and
\begin{enumerate}
\item[2.] $m$ is automorphism-equivariant on $V^2$ in the following sense: If $u,v,w,x\in V$ are such that $(w,x)=(\gamma u, \gamma v)$ for some $\gamma \in \operatorname{Aut}(G)$, then there exists an automorphism $\gamma'$ of $G'$ such that $(m_w,m_x) = (\gamma m_u, \gamma m_v)$.
% the multisets of isomorphism classes of rooted graphs
% $\{(G',w) : w\in m(v)\}$ and $\{(G',w) : w\in m(u)\}$ coincide.
\end{enumerate}
Then $G'$ is also unimodular. If furthermore $G$ is normalizable and 
% and that
\begin{equation}
\label{eq:m}
\sum_{o\in \cO(G)} \mu_G(\{o\}) |m_o|<\infty,
\end{equation}
then $G'$ is normalizable with
\[
\mu_{G'}(\{o'\}) = \sum_{v\in V'} \mathbbm{1}\left[o' \in \operatorname{Aut}\left(G'\right) v\right] \sum_{o\in \cO(G)}  \frac{\mu_G(\{o\})m_o(v)}{\sum_{o\in \cO(G)} \mu_G(\{o\}) |m_o|} \qquad o' \in \cO(G').
\]
%  Let $\tilde \rho$ be a random element of $\cO(G)$ drawn from the distribution $\tilde \mu_G$ defined by
% \[
% \tilde \mu_G(\{o\}) =  \frac{\mu_G(\{o\})|m(o)|}{\sum_{o\in \cO(G)} \mu_G(\{o\}) |m(o)|}, \qquad o \in \cO(G)
% \]
% and, conditional on $\tilde \rho$, let $\rho'$ be chosen according to the conditional distribution 
% \[
% \P(\rho'=u \mid \tilde \rho) = \frac{m_\rho(u)}{|m_\rho|}. 
% \]
%  Then the random rooted graph $(G',\rho')$ is unimodular, and so in particular $G'$ is normalizable. 

Following \cite{beringer2016percolation}, we call this method of constructing new normalizable unimodular graphs from old ones \textbf{replacement}. 
To give a simple example of replacement, suppose that $G$ is a connected, locally finite, normalizable unimodular graph, and let $G'$ be the graph in which each edge of $G$ is replaced with a path of length two. Define $m: V\to M_1(v')$ by setting $m_v(u)$ to be $1$ if $u$ is equal to $v$, and to be $1/2$ if $u$ is the midpoint of a path of length $2$ emanating from $v$ in $G'$ that was formerly an edge of $G$. It is easily verified that $m$ satisfies conditions $1$ and $2$ above. If furthermore $G$ has finite expected degree in the sense that $\sum_{o\in \cO(G)}\mu_G(\{o\})\deg(o)<\infty$, then \eqref{eq:m} is satisfied and  $G'$ is normalizable.

One can also consider a variation of this procedure allowing for randomization: 
% Suppose that $V'$ a set, that and that $G'$ is a 
 Let $G=(V,E)$ be a connected, locally finite, unimodular graph, let $V'$ be a set, and let $G'=(V',E')$ be a random connected, locally finite graph with vertex set $V'$, which we consider to be a random element of $\{0,1\}^{V^2}$. 
% If $\gamma':V'\to V'$ is a bijection, we say that the law of $G'$ is invariant under $\gamma'$ if for 
 Suppose that there exists a function $m:V\to M(V')$, $m : v \mapsto m_v$ such that
\begin{enumerate}
\item The functions $\{m_v : v\in V\}$ are a partition of unity on $V'$ in the sense that $\sum_{v\in V} m_v(u) = 1$ for every $u \in V'$.
 % and $m(u) \cap m(v) = \emptyset$ if $u\neq v$.
\end{enumerate}
and
\begin{enumerate}
\item[2.] $m$ is automorphism-equivariant on $V^2$ in the following sense: If $u,v,w,x\in V$ are such that $(w,x)=(\gamma u, \gamma v)$ for some $\gamma \in \operatorname{Aut}(G)$, then there exists a
bijection 
 $\gamma' : V' \to V'$ such that $(m_w,m_x) = (\gamma m_u, \gamma m_v)$ and the \emph{law} of $G'$ is invariant under the action of $\gamma'$ on $V^2$. 
% the multisets of isomorphism classes of rooted graphs
% $\{(G',w) : w\in m(v)\}$ and $\{(G',w) : w\in m(u)\}$ coincide.
\end{enumerate}
% Then $G'$ is also unimodular.
 Let $\tilde \rho$ be a random element of $\cO(G)$ drawn from biased measure $\tilde \mu_G$ defined by
\[
\tilde \mu_G(\{o\}) =  \frac{\mu_G(\{o\})|m(o)|}{\sum_{o\in \cO(G)} \mu_G(\{o\}) |m(o)|}, \qquad o \in \cO(G)
\]
and, conditional on $\tilde \rho$, let $\rho'\in V'$ be chosen according to the conditional distribution 
\[
\P(\rho'=u \mid \tilde \rho) = \frac{m_\rho(u)}{|m_\rho|}. 
\]
 Then the random rooted graph $(G',\rho')$ is unimodular.
 % , and so in particular $G'$ is normalizable. 

Fixing the vertex set of $G'$ in advance is of course rather unnecessary and restrictive, but it is sufficient for the examples we consider here.

\section{A discontinuous phase transition}

\subsection{Trees of tori}

Let $d \geq 2$. 
The \textbf{$d$-ary canopy tree} $T_d$ is the tree with vertex set $\Z \times \N$ and edge set
\[\left\{ \left\{ (i,j) , (k,j-1) \right\} : j \geq 1,\, d i \leq k \leq d (i+1)-1 \right\}.\]
In other words, $T_d$ is the tree that has infinitely many leaves (that have no children), and such that every vertex that is not a leaf has exactly $d$ children, that is, neighbours that are closer to the leaves than it is. 
Note that the isomorphism class of $(T_d,v)$ depends only on the distance between $v$ and the leaves, called the \textbf{height} of $v$, and denoted $|v|$. We also say that vertices with height $k$ for $k\geq 0$ are in \textbf{level} $k$. It is well known and easily verified  that $T_d$ is unimodular and normalizable, with
$\mu_{T_d}(\{o\}) = d^{-|o|+1}/(d-1).$

Let $n \geq 1$, and let $d,r \geq 2$.
We define the \textbf{tree of tori} $\mathbb{T}^n(d,r)$ to be the connected, locally finite graph with vertex set
\[ V\left(\mathbb{T}^n(d,r)\right) = \left\{ (v,x) : v \in V(T_d),\, x \in \Z^n / r^{|v|} \Z^n \right\}, \]
and where we connect two vertices $(v,x)$ and $(u,y)$ of $\mathbb{T}^n(d,r)$ by an edge if and only if either
\begin{enumerate}
\item $v=u$ and $x$ and $y$ are adjacent in the torus, or else
\item $u$ is adjacent to $v$ in $T_d$, and either $|v| \geq |u|$ and 
$x$ is mapped to $y$ by the quotient map $\Z^n / r^{|v|} \Z^n \to \Z^n / r^{|u|} \Z^n$ or, symmetrically, $|u| \geq |v|$ and 
$y$ is mapped to $x$ by the quotient map $\Z^n / r^{|u|} \Z^n \to \Z^n / r^{|v|} \Z^n$.
\end{enumerate}
See Figure \ref{fig:treeoftori} for an illustration. (Note that removing the torus edges from this graph yields the horocyclic product of the $d$-ary canopy tree with the $r^n$-ary tree, which also arises as a half-space of the Diestel-Leader graph $DL(d,r^n)$ \cite{MR1856226}.)

\begin{figure}[t]
\centering
% \begin{subfigure}[b]{\textwidth}
\includegraphics[width=0.95\textwidth]{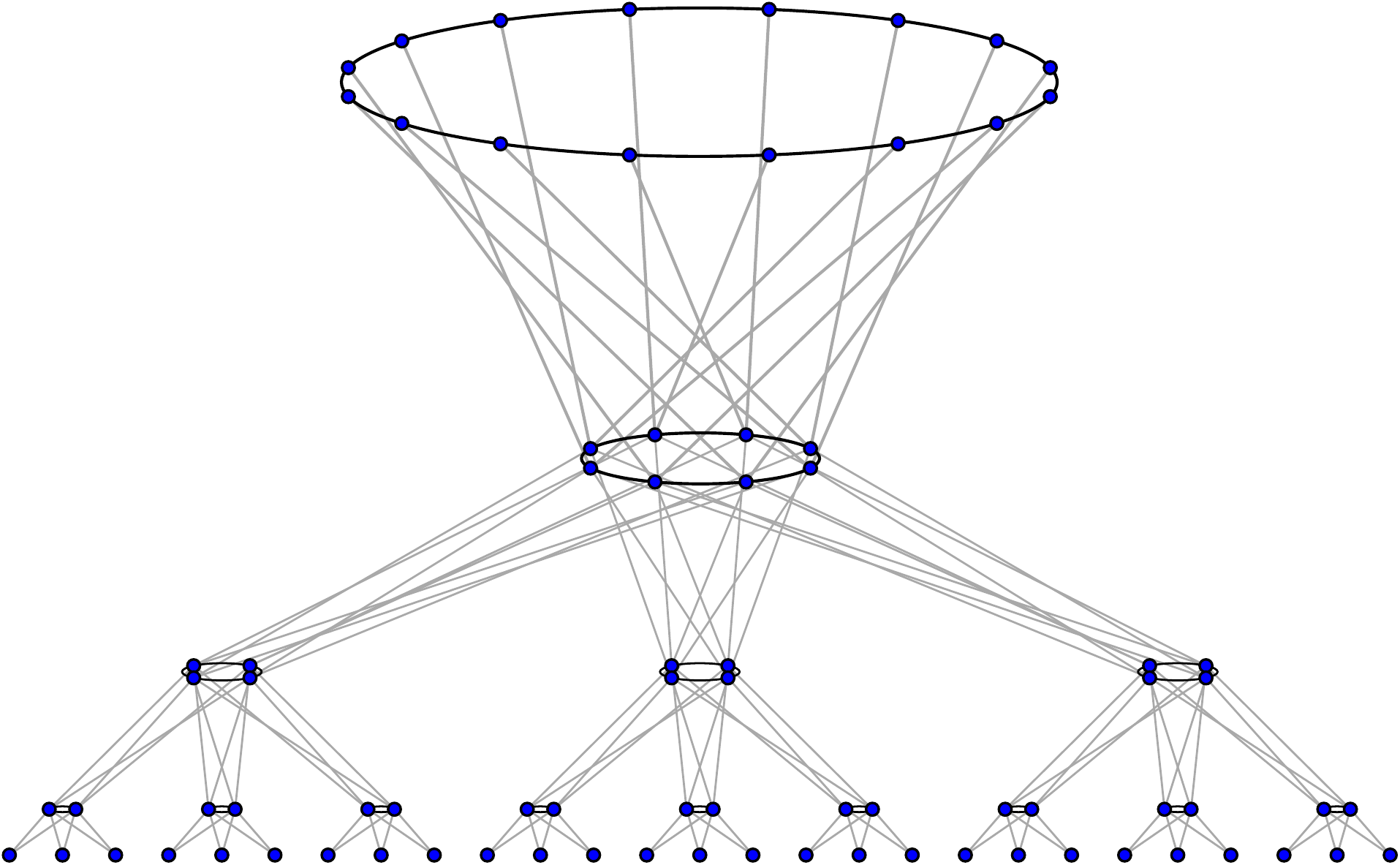}
\caption{The canopy tree of one-dimensional tori $\mathbb{T}^1(3,2)$. The grey edges give a $3$-to-$2$ correspondence between levels $l$ and $l+1$ for each $l\geq 0$. For general $\bbT^n(d,r)$ the correspondence is $d$-to-$r^n$. }
% \end{subfigure}
\label{fig:treeoftori}

\end{figure}

% Observe that, given $d$ and $r$, the isomorphism class of the rooted graph $\left(\mathbb{T}^n(d,r),(v,x)\right)$ depends only on $|v|$. 
The following is an easy consequence of replacement.
 % by replacement. 

\begin{proposition}
\label{prop:treesoftori}
% Suppose $d,n,$ and $r$ satisfy
% \[Z(d,n,r) = 1 + \sum_{\ell=1}^\infty r^{n \ell} \prod_{k=1}^\ell d(k)^{-1} < \infty.\]
If $r^n < d$ then $\bbT^n(d,r)$ is unimodular and normalizable, and
% $\rho=(\rho_1,\rho_2)$ be a random vertex of $\mathbb{T}^n(d,r)$ with 
\[\mu_{\bbT^n(d,r)}(\{o\}) = \frac{d^{-|o|+1}r^{n|o|}}{d-r^n}.\]
% \begin{cases}
% Z(d,n,r)^{-1}2^{n r(0)} & |o|=0\\
% Z(d,n,r)^{-1} 2^{n r(|o|)}\prod_{k=1}^{|o|} d(k)^{-1} & |o| \geq 1.
 % Then $(\mathbb{T}^n(d,r),\rho)$ is a unimodular random rooted graph.
\end{proposition}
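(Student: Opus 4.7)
The plan is to apply the replacement construction of Section 2.2 with $G = T_d$ (unimodular and normalizable under $\mu_{T_d}$ as stated just above) and $G' = \bbT^n(d, r)$. We take the mass function $m : V(T_d) \to M_1(V(\bbT^n(d, r)))$ to be $m_v((u, x)) = \mathbbm{1}[u = v]$, so that $m_v$ is the indicator of the torus sitting over $v$ and $|m_v| = r^{n|v|}$.

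Condition (1) (partition of unity) is immediate since each vertex $(u, x) \in V(\bbT^n(d, r))$ lies in exactly one torus. Condition (2) (automorphism equivariance) is the only point that requires some thought. We observe that any $\gamma \in \operatorname{Aut}(T_d)$ preserves heights, so $\gamma'(v, x) := (\gamma v, x)$ is a well-defined bijection on $V(\bbT^n(d, r))$. Moreover $\gamma'$ is a graph automorphism: torus edges within a single level are preserved trivially, while the tree edges between $(v, x)$ and $(u, y)$ are defined by a quotient-map condition that depends only on the torus coordinates $x, y$ and on the ordering of the heights $|v|$ and $|u|$. The crucial point is that the quotient maps $\Z^n / r^{|v|} \Z^n \to \Z^n / r^{|u|} \Z^n$ depend only on the heights $|v|, |u|$ and not on the specific canopy-tree vertices $v, u$, so the lift $\gamma'$ automatically respects the tree-edge structure. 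Since $\gamma' \cdot m_v = m_{\gamma v}$ by construction, condition (2) follows, and $\gamma'$ lifts actually embed $\operatorname{Aut}(T_d)$ as a unimodular subgroup of $\operatorname{Aut}(\bbT^n(d,r))$.

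Under the assumption $r^n < d$, the summability condition \eqref{eq:m} reduces to a convergent geometric series,
\[
\sum_{j = 0}^\infty \mu_{T_d}(\{o_j\}) \, |m_{o_j}| = \sum_{j = 0}^\infty \frac{d^{-j + 1}}{d - 1} \cdot r^{nj} = \frac{d}{d - 1} \sum_{j = 0}^\infty \left(\frac{r^n}{d}\right)^j < \infty,
\]
so the replacement principle concludes that $\bbT^n(d, r)$ is unimodular and normalizable. The closed-form expression follows by substituting into the formula from Section 2.2: for an orbit representative $o'$ at height $j$, the $r^{nj}$ vertices of the torus above the canopy-tree representative $o_j$ each contribute weight $\mu_{T_d}(\{o_j\}) = d^{-j+1}/(d-1)$, and after dividing by the above normalization and simplifying one recovers the asserted value $\mu_{\bbT^n(d, r)}(\{o'\}) = d^{-|o'| + 1} r^{n|o'|}/(d - r^n)$. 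The only substantive input is the edge-preservation check in condition (2); everything else is a bookkeeping computation with geometric series.
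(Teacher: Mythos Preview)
Your proposal is correct and follows exactly the approach the paper has in mind: the paper's entire proof is the sentence ``The following is an easy consequence of replacement,'' and you have simply supplied the routine verification of the replacement hypotheses (partition of unity, equivariance via the height-preserving lift $\gamma'(v,x)=(\gamma v,x)$, and the geometric-series summability check) that the paper leaves implicit. There is nothing to add.
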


It will also be useful to consider a more general version of this construction, in which we let the sizes of the tori grow as a specified function of the height. Let $d\geq 2$, let $n \geq 1$, and let $r:\N \to \N$ be an increasing function. We define the tree of tori $\tilde \bbT^n(d,r)$ similarly to above, with vertex set
\[ 
V\left(\tilde{\mathbb{T}}^n(d,r)\right) 
= \left\{ (v,x) : v \in V(T_d),\, x \in \Z^n / 
2^{ r(|v|)} \Z^n 
\right\}, \]
and where we connect two vertices $(v,x)$ and $(u,y)$ of $\mathbb{T}^n(d,r)$ by an edge if and only if either
\begin{enumerate}
\item $v=u$ and $x$ and $y$ are adjacent in the torus, or else
\item $u$ is adjacent to $v$ in $T_d$, and either $|v| \geq |u|$ and 
$x$ is mapped to $y$ by the quotient map $\Z^n / 2^{ r(|v|)} \Z^n \to \Z^n / 2^{r(|u|)} \Z^n$ or, symmetrically, $|u| \geq |v|$ and 
$y$ is mapped to $x$ by the quotient map $\Z^n / 2^{r(|u|)} \Z^n \to \Z^n / 2^{r(|v|)} \Z^n$.
\end{enumerate}
We now have, by replacement, that $\tilde\bbT^n(d,r)$ is unimodular and is normalizable if and only if
\[
\sum_{\ell \geq 0} d^{-\ell} 2^{n r(\ell)}<\infty, \quad \text{ in which case }\quad \mu_{\tilde{\bbT}^n(d,r)}(\{o\})= \frac{d^{-|o|+1}2^{nr(|o|)}}{\sum_{\ell \geq 0} d^{-\ell} 2^{n r(\ell)}}.
\]

\subsection{Proof of Theorem \ref{thm:critical}}

In this section we prove Theorem \ref{thm:critical}.
We begin with unbounded degree example, and then show how it can be modified to obtain a bounded degree example.

% Let $(T_2,\rho)$ be the unimodular random rooted binary canopy tree, in which every non-leaf vertex has degree $3$.
Let $d \geq 2$ and let $T_d$  be $d$-ary canopy tree. We write
\[
\log^+ x = \begin{cases}
1 & x \leq e\\
\log x &  x > e,
\end{cases}
\]
and write $\asymp$ for equalities that hold up to positive multiplicative constants. 
For each $\gamma \in \R$, let $G_{d,\gamma}$ be obtained from $T_d$ by replacing each edge connecting a vertex at height $n$ to a vertex at height $n+1$ with  
\[m_\gamma(n) := \left\lceil \frac{\log^+ n + \gamma \log^+ \log^+ n}{\log 2} \right\rceil\]  parallel edges, which is chosen so that
\[\left(\frac{1}{2}\right)^{m_\gamma(n)} \asymp \frac{1}{n \log^\gamma n}.\] 
% Here, $m_\gamma$ is chosen so that
It follows by replacement that $G_{d,\gamma}$ is unimodular. 

The basic idea behind this construction is that the coefficient of $\log^+ n$ above determines the value of $p_c$ (set here to be $1/2$), while the coefficient of $\log^+ \log^+ n$ determines the behaviour of percolation \emph{at} $p_c$.

\begin{proposition}
$p_c(G_{d,\gamma})=1/2$ for every $\gamma \in \R$. If $\gamma>1$, then  critical percolation on $G_{d,\gamma}$ contains an infinite cluster almost surely.
\end{proposition}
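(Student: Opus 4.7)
The plan is to exploit the fact that the cluster structure in $G_{d,\gamma}[p]$ depends only on which \emph{bundles} of parallel edges contain at least one open edge. Write $q_n(p) := 1 - (1-p)^{m_\gamma(n)}$ for the probability that the bundle between a height-$n$ vertex and its parent contains at least one open edge. Since every subtree of $T_d$ hanging below a vertex is finite, a height-$n$ vertex $v$ lies in an infinite cluster of $G_{d,\gamma}[p]$ if and only if its (unique) infinite upward path in $T_d$ has every bundle open. The bundle events are independent across levels, so
\[
\P\bigl(v \text{ is in an infinite cluster}\bigr) = \prod_{k \geq n} q_k(p).
\]

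I would then reduce the problem to the convergence of a single series. Setting $\beta = -\log_2(1-p)$, the definition of $m_\gamma(n)$ yields
\[
(1-p)^{m_\gamma(n)} = 2^{-\beta m_\gamma(n)} \asymp n^{-\beta}(\log n)^{-\beta\gamma}
\]
for large $n$, so $\sum_n (1-p)^{m_\gamma(n)}$ converges precisely when either $\beta > 1$, or $\beta = 1$ and $\gamma > 1$; equivalently, either $p > 1/2$, or $p = 1/2$ and $\gamma > 1$. Using $-\log q_n(p) \asymp (1-p)^{m_\gamma(n)}$ as $n \to \infty$, the product $\prod_{k \geq n} q_k(p)$ is strictly positive exactly in these cases, and is zero otherwise.

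Finally I would deduce both claims. If the product is zero (the case for all $p < 1/2$, and also for $p = 1/2$ when $\gamma \leq 1$), then a union bound over the countably many vertices shows that almost surely no vertex lies in an infinite cluster, so no infinite cluster exists. If the product is positive (the case for all $p > 1/2$, and for $p = 1/2$ when $\gamma > 1$), then $\prod_{k \geq N} q_k(p) \to 1$ as $N \to \infty$; consequently, for any fixed height-$N$ vertex $u$, the event that $u$'s upward path is entirely open has probability tending to $1$, and on this event an infinite cluster is present. Combining these yields $p_c(G_{d,\gamma}) = 1/2$ for every $\gamma \in \R$, together with the existence of an infinite cluster at $p_c$ whenever $\gamma > 1$; notably, no appeal to a $0$-$1$ law is required.

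The only thing to be careful about, and not a serious obstacle, is tracking the constants hidden in $\asymp$ through the ceiling appearing in $m_\gamma(n)$ and through the estimate $-\log(1-x) \asymp x$ for small $x$; these are entirely routine.
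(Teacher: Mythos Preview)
Your proposal is correct and follows essentially the same route as the paper: reduce to the infinite product $\prod_{k\geq n}\bigl(1-(1-p)^{m_\gamma(k)}\bigr)$, convert positivity of the product to convergence of $\sum_k (1-p)^{m_\gamma(k)}$, and identify this with the standard convergence criterion for $\sum n^{-\alpha}(\log n)^{-\beta}$. Your final paragraph, using $\prod_{k\geq N} q_k(p)\to 1$ to get the ``almost surely'' directly, is in fact a touch more explicit than the paper, which stops at $\theta_\gamma(v,1/2)>0$ and leaves the upgrade to probability one implicit.
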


\begin{proof}
If $v$ is a vertex of $G_{d,\gamma}$, then the cluster of $v$ in $G_{d,\gamma}[p]$ is infinite if and only if every ancestor of $v$ has an open edge connecting it to its parent. This event occurs with probability
% Thus, the probability that a vertex $v$ of $G_m$ is in an infinite connected cluster of Bernoulli $p$ percolation is equal to
\[ \theta_\gamma(v,p) = \prod_{n \geq |v|} \left(1-(1-p)^{m_{\gamma}(n)}\right).\]
In particular, $\theta_\gamma(v,p)>0$ if and only if 
\[
  \sum_{n \geq 0} (1-p)^{m_\gamma(n)} < \infty.
\]
Since
\[ 
  n^\frac{\log p}{\log 2}  (\log n)^\frac{\gamma \log p}{\log 2}
  \leq (1-p)^{m_\gamma(n)}
  \leq (1-p) n^\frac{\log p}{\log 2}  (\log n)^\frac{\gamma \log p}{\log 2},
\]
it follows that $\theta_\gamma(v,p)>0$ if and only if
\[\sum_{n \geq 0} n^\frac{\log p}{\log 2}  (\log n)^\frac{\gamma \log p}{\log 2} <\infty.
\]
Recall that the series 
\[
  \sum_{n \geq 0} \frac{1}{n^\alpha (\log^+ n)^\beta} 
\]
converges if and only if either $\alpha>1$ or $\alpha=1$ and $\beta>1$.
Thus, $\theta_\gamma(v,p)>0$ for some (and hence every) vertex $v$ of $G_m$ if and only if either $p>1/2$, or $p=1/2$ and $\gamma>1$.
In particular, $p_c(G_{d,\gamma})=1/2$ for every $d$ and every value of $\gamma$, while if $\gamma>1$ then $\theta_{\gamma}(v,1/2)>0$ for every vertex $v$ of $G$ as desired.
\end{proof}

% We now construct a bounded degree variant of this example. Let $(T_d,\rho)$ be a unimodular canopy tree as before. This time, we 
% \medskip

% Let $d: \N \setminus \{0\} \to \N \setminus \{0\}$ 

\medskip

We now build a bounded degree variation on this example using trees of tori. Let $d,r \geq 2$, be such that $d > r^2$, and let 
 % : \N\setminus \{0\} \to \N\setminus \{0\}$, $r: \N \to \N$ and 
$m : \N \to \N \setminus\{0\}$ be a function. 
% , and that $r$ is increasing. 
 % \[1 + \sum_{\ell=1}^\infty m(\ell) 2^{n r(\ell)} \prod_{k=1}^\ell d(k)^{-1}\]
% be such that
% 
 % $\mathbb{T}^2(d,r)$ be the corresponding canopy tree of tori. Let $m : \N\setminus\{0\} \to \N \setminus\{0\}$ be such that 
% 
Let $\mathbb{G}(d,r,m)$ be the graph obtained by replacing each edge connecting two vertices of height $\ell$ and $\ell+1$ in $\mathbb{T}^2(d,r)$ with a path of length $m(\ell)$. It follows by replacement that $G$ is unimodular, and is normalizable if
\begin{equation}
\label{eq:pcnormalization}
\sum_{\ell \geq 0} d^{-\ell} r^{2 \ell} m(\ell) <\infty.
% \sum_{\ell=1}^\infty \left[2^{n r(\ell)}+m(\ell) 2^{n r(\ell+1)}\right] \prod_{k=1}^\ell d(k)^{-1} <\infty.
\end{equation}
Thus, Theorem \ref{thm:critical} follows immediately from the following proposition.
\begin{proposition}
% \cref{corollary:quantitativeslow}
\label{prop:criticalbdddegree}
Let $0<q<1$ be sufficiently large that $\theta_q(\Z^2)>3/4$, and 
% Let $d,r \geq 1$ be such that $d>r^2$ and let
% \[
% 1-(3/4)^{m(\ell)} =  1
% \]
let $m:\N \setminus\{0\}\to\N \setminus \{0\}$ be such that there exists a positive constant $c$ such that
\[
c\, 4^{-\ell}(\ell+1)^2 \leq  q^{m(\ell)} \leq 4^{-\ell}(\ell+1)^2
\]
for every $\ell \geq 1$. 
Then $\bbG=\bbG(5,2,m)$ is a normalizable, bounded degree, unimodular graph, $p_c(\bbG)=q$, and $\bbG[q]$ contains an infinite cluster almost surely.
\end{proposition}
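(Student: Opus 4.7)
The plan is to break the proposition into three tasks: verifying normalizability and bounded degrees, proving $p_c(\bbG)\geq q$ by a first-moment argument, and producing an infinite cluster at $p=q$ by combining supercritical-torus giants with an inclusion--exclusion connection estimate between consecutive levels. Normalizability and bounded degrees come first: the hypothesis that $q^{m(\ell)}$ lies between two constant multiples of $4^{-\ell}(\ell+1)^2$ forces $m(\ell)=\Theta(\ell)$; substituting $d=5$ and $r=2$ into the replacement condition \eqref{eq:pcnormalization} reduces the requirement to $\sum_{\ell\geq 0}(4/5)^\ell m(\ell)<\infty$, which holds since $4/5<1$. Every vertex of $\bbT^2(5,2)$ has degree at most $4+5+4=13$ (torus neighbours, five tree-edges down to child tori, four tree-edges up to the parent torus), and subdivision of each between-level edge into a path of length $m(\ell)$ only inserts new degree-$2$ vertices, so $\bbG$ has bounded degrees.

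For the lower bound, fix $p<q$. A between-level edge of $\bbT^2(5,2)$ connecting levels $\ell$ and $\ell+1$ becomes an open path in $\bbG[p]$ with probability
\[p^{m(\ell)}=(p/q)^{m(\ell)}q^{m(\ell)}\leq (p/q)^{m(\ell)} 4^{-\ell}(\ell+1)^2;\]
since $m(\ell)=\Theta(\ell)$ and $p/q<1$, this is bounded by $(\ell+1)^2 e^{-c\ell}$ for some $c=c(p,q)>0$. Summing over the $4^{\ell+1}$ edges between a given parent torus and one of its child tori, then summing over $\ell$, the expected number of open between-level paths along any fixed upward chain of tori is finite, so Borel--Cantelli gives that almost surely only finitely many levels along that chain carry any open between-level path. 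Every cluster is thus trapped inside a finite union of low-level tori, hence finite, and so $p_c(\bbG)\geq q$.

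For the infinite cluster at $p=q$, fix any upward chain of parent tori $T_0,T_1,T_2,\ldots$ in $\bbG$ and write $G_\ell$ for the largest cluster of the induced Bernoulli($q$) percolation on the un-subdivided torus $T_\ell$. Since $\theta_q(\Z^2)>3/4$, standard concentration results for supercritical percolation on $2^\ell\times 2^\ell$ tori yield $\P(|G_\ell|<(3/4)\cdot 4^\ell)\leq e^{-c\,2^\ell}$, which is summable. On the event that both $|G_\ell|\geq (3/4)\cdot 4^\ell$ and $|G_{\ell+1}|\geq (3/4)\cdot 4^{\ell+1}$, applying inclusion--exclusion to the $4^{\ell+1}$ between-level edges of $\bbT^2(5,2)$ (each of which pairs a parent-torus vertex $x$ with the unique child-torus vertex $\pi(x)$) gives
\[M_\ell:=|G_{\ell+1}\cap \pi^{-1}(G_\ell)|\geq |G_{\ell+1}|+4|G_\ell|-4^{\ell+1}\geq 2\cdot 4^\ell.\]
Conditional on the two torus configurations, the paths replacing these $M_\ell$ edges are mutually independent, each open with probability $q^{m(\ell)}\geq c\,4^{-\ell}(\ell+1)^2$, so the conditional probability that none is open is at most $\exp(-2c(\ell+1)^2)$, again summable in $\ell$. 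Borel--Cantelli (which needs no independence across $\ell$) then gives that almost surely there is a random $\ell_0$ such that $G_\ell$ and $G_{\ell+1}$ are joined by an open path in $\bbG[q]$ for every $\ell\geq \ell_0$, and the union $\bigcup_{\ell\geq \ell_0}G_\ell$ lies in one infinite cluster.

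The hard part is calibrating the inclusion--exclusion against the growth of $q^{m(\ell)}$: the specific scaling $q^{m(\ell)}\asymp 4^{-\ell}(\ell+1)^2$ is tuned precisely so that the expected number of open paths joining two consecutive giants grows like $(\ell+1)^2$, which is just enough slack beyond the borderline scaling $q^{m(\ell)}\asymp 4^{-\ell}$ (at which one would still have $p_c=q$) to make the level-by-level failure probability summable in $\ell$. This lets the easy direction of Borel--Cantelli produce the infinite cluster at criticality without any delicate analysis of the (weak) dependence between consecutive level-events caused by the shared giant $G_{\ell+1}$.
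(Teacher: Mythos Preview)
Your proof is correct and follows essentially the same approach as the paper's: a first-moment/Borel--Cantelli argument for $p_c\geq q$, and a giant-component-plus-Borel--Cantelli argument for the infinite cluster at $q$. The only notable differences are that your inclusion--exclusion count $M_\ell\geq 2\cdot 4^\ell$ is slightly sharper than the paper's $4^\ell/2$, and that you invoke an exponential concentration bound for the torus giant as ``standard'' whereas the paper proves a weaker (but still summable) polynomial bound in its \cref{lem:supercriticaltoruspercolation}.
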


For an example of a function $m$ of the form required by Proposition \ref{prop:criticalbdddegree}, we can take
\[
m (\ell) \equiv \left\lceil 
% \frac{\log \ell + \gamma \log \log \ell}{}
\frac{(\ell+2) \log 4 - 2 \log^+ \ell}{\log (1/q)}
\right\rceil.
\]

\begin{proof}
It is clear that $\bbG=\mathbb{G}(5,2,m)$ has bounded degrees, and we have already established that it is unimodular and normalizable.
% Let us first prove that
We now prove the statements concerning percolation on $\bbG$. Suppose that $v$ is a vertex of the canopy tree $T_d$ with $|v|=\ell$, and suppose that $u$ is the parent of $v$ in $T_d$. Thus, the torus $\{v\}\times \left(\Z^2/2^\ell \Z^2\right)$ is connected in $\bbG$ to the torus $\{u\}\times \left(\Z^2/2^{\ell+1} \Z^2\right)$ by 
$4^{\ell+1}$ paths of length $m(\ell)$. 
If $p<q$ then $p=q^{1+\delta}$ for some $\delta>0$, and so
the expected number of these paths that are open in $\bbG[p]$ is 
\[ 4^{\ell+1} p^{m(\ell)} 
= 4^{\ell+1}q^{(1+\delta)m(\ell)}
\asymp \ell^2 4^{-\delta \ell}.\]
 % \exp\left[ \left(\log 4 + \frac{\log 4 \log p}{\log(1/q)}\right) \ell - \frac{2 \log p}{\log(1/q)} \log \ell \right]\]
Since this expectation converges to zero, it follows that $\bbG[p]$ does not contain an infinite cluster almost surely, and we conclude that $p_c(\bbG) \geq q$. 

It remains to prove that $\bbG[q]$ contains an infinite cluster almost surely. Broadly speaking, the idea is that, since $\theta_q(\Z^2)>3/4$, each torus in $\bbG$ has a high probability to contain a giant open component which contains at least three quarters of its vertices, which is necessarily unique. The logarithmic correction in the definition of $m$ then ensures that the giant component in each torus is very likely to be connected by an open path to the giant component in its parent torus, which implies that an infinite open component exists as claimed. 

To make this argument rigorous, we will apply the following rather crude  estimate. 
% Presumably much better bounds can be found in the literature. 

\begin{proposition}
\label{lem:supercriticaltoruspercolation}
Consider Bernoulli bond percolation on the $n \times n$ torus, $\Z^2/n\Z^2$, for $p>p_c(\Z^2)$ supercritical. There exist positive constants $c_1$ and $c_2$ depending on $p$ such that for every $\eps>0$, the probability that $\Z^2/n\Z^2$ does \emph{not} contain an open cluster $C$ with $|C| \geq (\theta_{p}(\Z^2)-\eps)n^2$ is at most
 % and let $\sA$ be the event that $\Z^2/n\Z^2$ contains an open cluster $C$ such that $|C| \geq (\theta_{p}(\Z^2)-\eps)n^2$.  
 % Then there exists a constant $c=c_p$ such that
\[\frac{c_1 }{\eps^2 n^2} + n^2e^{-c_2 \eps n}.\]
\end{proposition}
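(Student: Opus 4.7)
My plan is to prove Proposition~\ref{lem:supercriticaltoruspercolation} via a Pisztora-style block renormalization, which is the standard tool for extracting supercritical sharpness estimates in planar Bernoulli percolation.

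\emph{Step 1 (blocks).} I would first partition $T_n = \Z^2/n\Z^2$ into disjoint blocks of side $k = \lceil \alpha \eps n \rceil$, where $\alpha = \alpha(p) > 0$ is a small constant chosen so that the renormalization estimate below applies. For each block $B$, let $\tilde B$ be the enlarged block obtained by taking the union of the nine side-$k$ blocks concentric with $B$ and adjacent to it in $T_n$. Call $B$ \emph{good} when the Bernoulli-$p$ configuration restricted to $\tilde B$ contains a unique cluster $C^*_{\tilde B}$ that crosses $\tilde B$ both horizontally and vertically and satisfies $|C^*_{\tilde B} \cap B| \geq (\theta_p(\Z^2) - \eps/4)k^2$.

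\emph{Step 2 (renormalization).} Next I would invoke the classical renormalization estimate for supercritical planar Bernoulli percolation (Pisztora/Deuschel--Pisztora, obtainable in 2D either via the Grimmett--Marstrand slab construction or from exponential decay of finite clusters combined with planar duality and Russo--Seymour--Welsh theory): there exist constants $c_3 = c_3(p) > 0$ and a scale $k_0 = k_0(p,\eps)$ such that $\P(B \text{ is good}) \geq 1 - e^{-c_3 k}$ whenever $k \geq k_0$. A deterministic planar intersection argument then shows that if two adjacent blocks $B$ and $B'$ are both good, the crossings $C^*_{\tilde B}$ and $C^*_{\tilde B'}$ must meet inside the overlap strip $\tilde B \cap \tilde B'$ (two parallel crossings of a rectangle always intersect), and therefore they lie in the same cluster of $T_n$.

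\emph{Step 3 (union bound and cluster size).} On the event $\mathcal{G}$ that every block is good---which by the union bound has probability at least $1 - (n/k)^2 e^{-c_3 k} = 1 - O(\eps^{-2}) e^{-c_3 \alpha \eps n}$---the crossings of all blocks merge into a single cluster $C^*$ of $T_n$ whose size is at least $(n/k)^2 \cdot (\theta_p - \eps/4)k^2 \geq (\theta_p - \eps)n^2$, as required. It then remains to rewrite $O(\eps^{-2}) e^{-c_3 \alpha \eps n}$ in the stated two-term form: for $\eps n$ below any fixed constant threshold (in particular below $k_0$) the first term $c_1/(\eps^2 n^2)$ can be made vacuously $\geq 1$ by choosing $c_1$ large, while for $\eps n$ above that threshold the prefactor $\eps^{-2}$ is easily absorbed into the exponential by slightly decreasing the rate, giving $n^2 e^{-c_2 \eps n}$ for some $c_2 < c_3 \alpha$.

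The main obstacle is Step 2: setting up the good-block event so that its failure probability really does decay exponentially in $k$ (requiring a density renormalization that is uniform in the block position in the torus, and the planar deterministic merging between adjacent good blocks). Both statements are classical for supercritical 2D Bernoulli percolation but are the deep inputs; once they are in place, everything else is a union bound followed by elementary manipulation to match the stated form of the estimate.
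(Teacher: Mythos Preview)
Your block-renormalization approach is a natural idea, but it is genuinely different from the paper's proof, and as written it has a gap in the handling of the $\eps$-dependence.

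The paper's argument is more elementary and separates the two terms in the bound cleanly. It works on the box $[1,n]^2$ and uses two independent ingredients: (i) a second-moment (Chebyshev) bound on the number of interior vertices connected to infinity, which directly yields the polynomial term $c_1/(\eps^2 n^2)$; and (ii) the Antal--Pisztora chemical-distance estimate, which says that two interior vertices both connected to infinity are, except with probability $e^{-c_p \delta n}$, connected to each other inside the box. A union bound over vertices then gives the $n^2 e^{-c_2\eps n}$ term, with $\delta$ chosen proportional to $\eps$.

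The gap in your argument is in Step~2. Your ``good block'' event includes the density requirement $|C^*_{\tilde B}\cap B|\ge(\theta_p-\eps/4)k^2$, and you assert that $\P(B\text{ bad})\le e^{-c_3 k}$ with $c_3=c_3(p)$ independent of $\eps$. This is not correct: the large-deviation rate for a density deficit of order $\eps$ vanishes as $\eps\to 0$ (in two dimensions the Wulff/Deuschel--Pisztora picture gives a surface-order rate of order $\sqrt{\eps}$). Consequently one cannot have both $c_3$ depending only on $p$ and the bound valid for all $k\ge k_0(p,\eps)$; one of the two must carry the $\eps$-dependence, and either choice spoils your Step~3. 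With blocks of side $k\asymp\eps n$ the density part of the union bound is at best $\asymp\eps^{-2}e^{-c\eps^{3/2}n}$, which does not match the claimed $n^2e^{-c_2\eps n}$ with $c_2=c_2(p)$. The related step in Step~3 where you treat $k_0$ as a ``fixed constant threshold'' suffers from the same problem, since $k_0$ depends on $\eps$.

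The natural repair is to drop the density condition from the definition of a good block, so that the crossing/uniqueness failure genuinely decays as $e^{-c_3(p)k}$, and then control the density of the resulting giant cluster by a single Chebyshev bound on the whole torus. But once you do that, you have essentially reproduced the paper's decomposition: Chebyshev for the $c_1/(\eps^2 n^2)$ term and an exponential connectivity estimate (via renormalization or Antal--Pisztora) for the $n^2 e^{-c_2\eps n}$ term.
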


\begin{proof}
It suffices to prove the analogous statement for the box $[1,n]^2$, which we consider as a subgraph of $\Z^2$.
It follows from \cite[Theorem 1.1]{AntalPisztora96} that if $p>p_c(\Z^2)$, $\delta>0$,  and $x,y \in [\delta n , (1-\delta)n]^2$, then there exists a positive constant $c_p$ such that
\[
\P\left( x \leftrightarrow \infty \text{ and } y \leftrightarrow \infty, \text{ but } x \nleftrightarrow y \text{ in } [0,n]^2 \cap \Z^2 \right)\leq e^{-c_p \delta n}.
\]
Thus, it follows by a union bound that the probability that the largest cluster in $[1,n]^2$ has size at most $(\theta_p(\Z^2)-\eps)n^2$ is at most
\[
\P_p\left(\sum_{x,y \in [\delta n , (1-\delta)n]^2 } \mathbbm{1}\left(x\leftrightarrow \infty, y \leftrightarrow \infty \right) \leq 
(\theta_p(\Z^2)-\eps)n^2 \right) + n^2 e^{-c_p \delta n}.
\]
%  We apply the following theorem of [ref]: If $p>p_c$ then there exists $\xi_p>0$ such that
% \[
% \P_p( v \leftrightarrow \partial B_m(v),\, v\nleftrightarrow \infty ) \leq e^{-\xi_p m}.
% \] 
% This yields the correlation estimate
% \begin{align*}
% \P_p(u \leftrightarrow \infty, v \leftrightarrow \infty)-\P_p(v \leftrightarrow \infty)^2 &\leq \P_p\left(u \leftrightarrow \partial B_{d(u,v)/4}(u), v \leftrightarrow \partial B_{d(u,v)/4}(v)\right) - \P_p(v \leftrightarrow \infty)^2\\
% &= \P_p\left(u \leftrightarrow \partial B_{d(u,v)/4}(u)\right)^2 - \P_p(v \leftrightarrow \infty)^2\\
% &\leq \left(\P_p(v \leftrightarrow \infty) + e^{-\xi_p d(u,v)/4} \right)^2 - \P_p(v \leftrightarrow \infty)^2 \leq 3e^{ -\xi_p d(u,v)/4 }.
% \end{align*}
% Summing over $[1,n]^2$ we obtain that
On the other hand, we have that \cite[Section 11.6]{grimmett2010percolation}
\[
\operatorname{Var}\left[ \sum_{x\in [\delta n, (1-\delta) n]^2} \mathbbm{1}\left( x \leftrightarrow \infty \right)\right] \leq Cn^2
\]
for some constant $C=C_p$, and it follows by Chebyshev's inequality that
\[
\P\left(\sum_{x\in [\eps n/2 , (1-\eps/2)n]^2} \mathbbm{1}\left( x \leftrightarrow \infty \right) \leq (\theta_p(\Z^2)-\eps)n^2\right) \leq C\left[(1-\delta)^2 \theta_p(\Z^2)-\theta_p(\Z^2) +\eps\right]^{-2} n^{-2}
\]
when the right hand side is positive.
We conclude by taking $\delta>0$ so that $(1-\delta)^2 \theta_p(\Z^2)-\theta_p(\Z^2) +\eps = \eps/2$.
\end{proof}

We now apply Lemma \ref{lem:supercriticaltoruspercolation} to complete the proof of Proposition \ref{prop:criticalbdddegree}. 
Let $v_0$ be a leaf of $T_5$, let $v_1,v_2,\ldots$ be its sequence of ancestors, and let $\Lambda_i$ be the torus $\{v_i\} \times \left(\Z^2/ 2^i \Z^2\right)$ in $\bbG$. It follows from Lemma \ref{lem:supercriticaltoruspercolation} and the Borel-Cantelli Lemma that $\Lambda_i[q]$ contains a (necessarily unique) giant open cluster of size at least $(3/4)4^i$ for every $i \geq i_0$ for some random, almost surely finite $i_0$. Thus, for each $i \geq i_0$, there exist at least $4^i/2$ vertices of $\Lambda_i$ that are both contained in the giant open cluster of $\Lambda_i[q]$, and have a parent in $\Lambda_{i+1}[q]$ that is contained in the giant open cluster of $\Lambda_{i+1}[q]$. Thus, conditional on this event, for each $i$ sufficiently large, the probability that the giant open cluster of $\Lambda_i[q]$ is \emph{not} connected by an open path to the giant open cluster of $\Lambda_{i+1}[q]$ is at most
\[
\left(1-q^{m(i)}\right)^{4^{i}/2} 
\leq \left(1-q i^2 4^{-i}\right)^{4^{i}/2} \leq e^{-qi^2/2},
% = 1-\left((1-q\ell^2 4^{-\ell})^{4^{\ell}/(q\ell^2)} \right)^{q\ell^2/2}
% \geq 1 - 4^{-}
\]
where we have used 
the inequality $(1-x) \leq e^{-x}$, which holds for all $x \geq 1$,
% \[\left(1-\frac{a}{x}\right)^x \leq e^{-a} \qquad \text{if } x \geq a \]
to obtain the second inequality. Since these probabilities are summable, it follows by Borel-Cantelli that there exists a random, almost surely finite $i_1 \geq i_0$ such that the giant open cluster of $\Lambda_i[q]$ is connected to the giant open cluster of $\Lambda_{i+1}[q]$ for every $i\geq i_1$. It follows that $\bbG[q]$ contains an infinite cluster almost surely. \qedhere
% this probability is at least
% \[1-e^{-q\ell^2/2}\]

\end{proof}
% It is easily verified that these choices satisfy \eqref{eq:pcnormalization}. 

\section{Nonamenability and uniqueness}

In this section we prove Theorem \ref{thm:pcpu} by constructing a nonamenable, unimodular, normalizable, bounded degree graph $G$ for which $p_c(G)=p_u(G)$ for Bernoulli bond percolation. We begin by constructing a family of partitions of the four regular tree.

\subsection{Isolated, invariantly defined partitions of the tree}

% Let $S$ be a $3$-regular tree.

\begin{figure}[p!]
\centering
\includegraphics[width=0.95\textwidth]{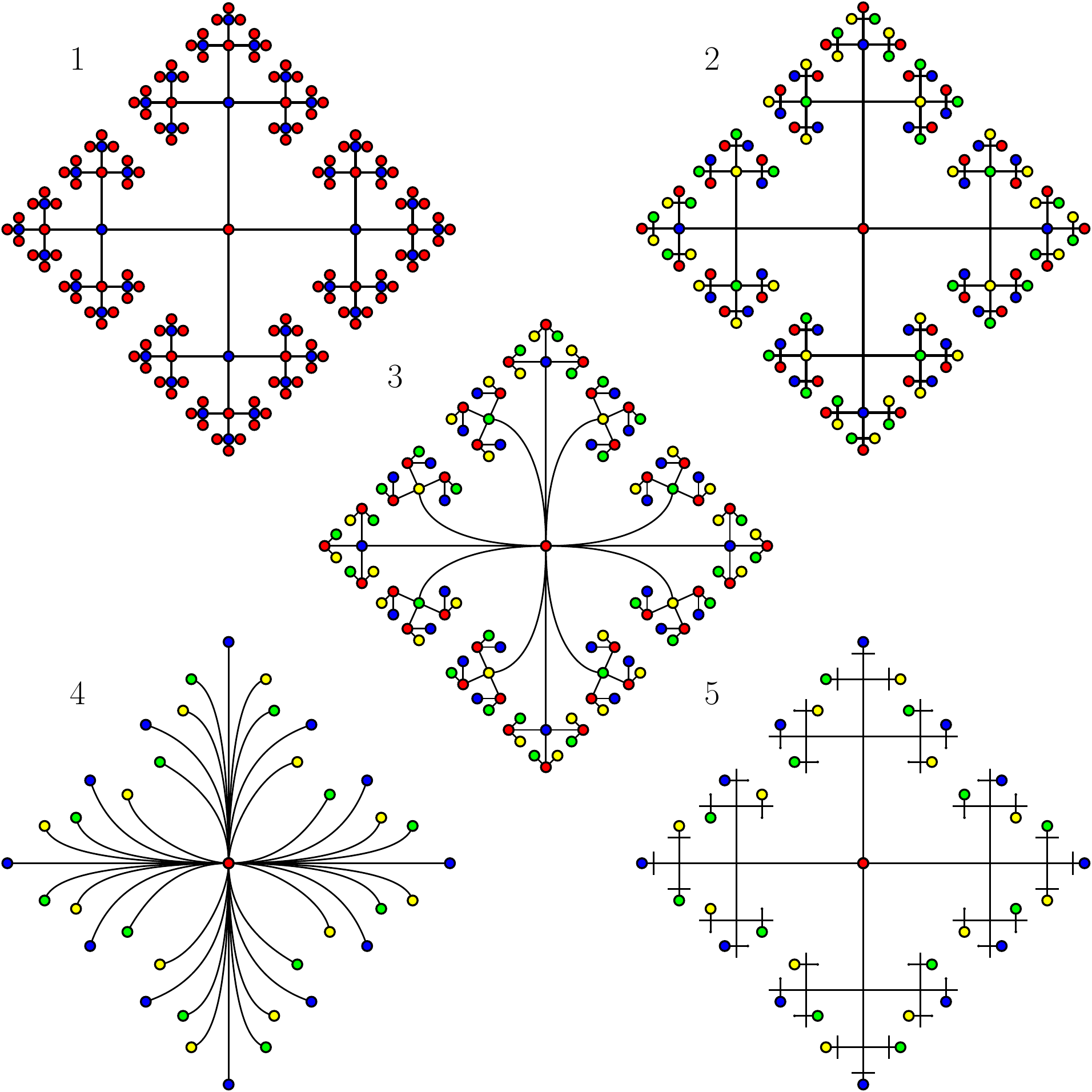}
\caption{Recursively constructing the class containing the origin in the hierarchical partition of the $4$-regular tree. $(1)$ shows the decomposition of the tree into its two bipartite classes. $(2)$ shows the subdivision of one of the two classes appearing in $(1)$ classes into four classes as occurs in $\bbV_2(S)$. $(3)$ shows the bipartite tree corresponding to the class of the origin in $\bbV_2(S)$, in which the two bipartite classes are `red' and `everything else'.
 % (here we have pictured the one corresponding to the class at the origin)
  $(4)$ shows the outcome of applying the same procedure another time, splitting the class of the origin in $\bbV_2(S)$ into four further subclasses and obtaining an associated bipartite tree for each of these classes. $(5)$ shows the classes in $(4)$ as they appear in the original $4$-regular tree.}
\end{figure}

Let $S$ be a $4$-regular tree. If we draw $S$ in the plane, then for each vertex $v$ of $S$ we obtain a cyclic ordering of the edges emanating from $v$ that encodes the clockwise order that the edges appear around $v$ in the drawing. We fix one such family of cyclic orderings, and let $\Gamma$ be the group of automorphisms of $S$ that fix this family of cyclic orderings. (In other words, we consider $S$ as a \textbf{plane tree}.)  It is well known that $\Gamma$ is unimodular.

We define the \textbf{isolation} of a subset $W$ of $V(S)$ to be the minimal distance between distinct points of $S$. 
If $A$ and $B$ are partitions of a set, we say that $A$ \textbf{refines} $B$ if every set in $W\in A$ is contained in a set of $B$. 
We say further that $A$ is a $k$\textbf{-fold refinement} of $B$ if every set in $B$ is equal to the union of exactly $k$ sets in $A$. Similarly, we say that the fold of the refinement is \textbf{bounded by $k$} if every set in $B$ is equal to the union of at most $k$ sets in $A$.

\begin{prop}
\label{prop:partition}
There exists a random  sequence of partitions $(\tilde \bbV_k)_{k\geq0}$ of $V(S)$ with the following properties.
\begin{enumerate}
\item The law of $(\tilde \bbV_k)_{k\geq 0}$ is invariant under $\Gamma$. 
\item $\bbV_0 = \{V(S)\}$, and for each $n\geq 0$, $\tilde \bbV_{n+1}$ is a refinement of $\tilde \bbV_n$ with fold bounded by $4$. 
\item Each $W \in \tilde \bbV_k$ has isolation at least $2\lfloor \log_2 k -1 \rfloor$.
\end{enumerate}
\end{prop}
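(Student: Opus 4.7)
The plan is to build $(\tilde\bbV_k)_{k\geq 0}$ recursively, starting from $\tilde\bbV_0 = \{V(S)\}$ and refining each class at each step using a combination of the plane-tree structure and auxiliary i.i.d.\ randomness attached to vertices of $S$. The central subroutine is a $\Gamma$-equivariant bipartition procedure: given any $\Gamma$-equivariant random class $W \subseteq V(S)$ in which all pairwise distances share a common parity and whose isolation is $d$, the procedure produces a two-coloring of $W$ in which each color class has isolation at least $d+2$.

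To set up the bipartition procedure, I would associate to each class $W$ the auxiliary graph $H_W$ on vertex set $W$ whose edges are pairs at $S$-distance equal to the isolation $d$. Because $S$ is a tree (so has no closed walks of odd length) and all distances in $W$ have a common parity, an odd cycle in $H_W$ would produce a closed walk of odd length in $S$, so $H_W$ is bipartite. Each connected component then admits exactly two bipartitions; to select one in a $\Gamma$-equivariant manner I would attach an independent $\mathrm{Uniform}[0,1]$ label to each vertex of $S$ at the outset and, in each component, designate as ``red'' the bipartite side containing the vertex of smallest label in some canonically chosen finite neighborhood. Because the labels are almost surely distinct, this choice is well-defined and equivariant under $\Gamma$. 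The parity constraint then upgrades the ``distance $\geq d+1$ in $H_W$'' conclusion of bipartiteness to ``distance $\geq d+2$ in $S$''.

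Given this subroutine, I define the sequence by a simple schedule: most refinement steps are trivial (with $\tilde\bbV_{n+1} = \tilde\bbV_n$, corresponding to fold $1$), and at specified indices $n \in \{4,8,16,\ldots\}$ I apply the bipartition subroutine class-by-class (fold $2$, well within the allowed fold-$4$ bound). $\Gamma$-invariance of the law holds because every step is $\Gamma$-equivariant, and the refinement and fold properties in item 2 are immediate. For item 3, a straightforward induction on $k$ shows that by step $k$ we have performed $\max\{0,\lfloor \log_2 k \rfloor -1\}$ bipartitions, each raising isolation by $2$, so the isolation of any $W \in \tilde\bbV_k$ is at least $2(\lfloor \log_2 k \rfloor - 1) \geq 2 \lfloor \log_2 k - 1 \rfloor$, as required. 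Throughout I would maintain the invariant that within each class all pairwise distances have a fixed parity; this holds at $\tilde\bbV_0$ trivially and is preserved by each bipartition.

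The main obstacle I anticipate is the $\Gamma$-equivariance of the bipartition choice, particularly when $H_W$ has infinitely many connected components or when an automorphism of $S$ fixes $W$ setwise while swapping the two sides of some bipartition. The random labels are designed to break exactly these residual symmetries, but one must verify that the induced map from ($S$, random labels, current partition) to ($S$, random labels, refined partition) commutes with the $\Gamma$-action in a measurable way. The plane cyclic orderings help by letting us canonically enumerate vertices within any fixed-radius ball, so that ``vertex of smallest label in a canonical finite neighborhood'' is a genuinely $\Gamma$-equivariant function — but checking this rigorously is the most delicate step.
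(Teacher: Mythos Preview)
Your central subroutine fails: the auxiliary graph $H_W$ is not bipartite once the isolation $d$ is even. Concretely, take $W$ to be one of the two bipartite classes of $S$, so that $d=2$. For any vertex $v \notin W$, its four neighbours $a,b,c,d$ all lie in $W$ and are pairwise at distance exactly $2$ in $S$, so they span a copy of $K_4$ in $H_W$; in particular $H_W$ contains triangles. Your parity argument does not rule this out: concatenating the geodesics of an odd cycle of length $2m+1$ in $H_W$ gives a closed walk in $S$ of length $(2m+1)d$, and when $d$ is even this length is even, so there is no contradiction with $S$ being a tree. (Relatedly, the invariant ``all pairwise distances in $W$ share a common parity'' does \emph{not} hold at $\tilde\bbV_0=\{V(S)\}$, contrary to what you assert.) Since the bipartition step breaks already at the second application, the inductive isolation gain never gets off the ground.

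The paper's construction avoids this obstruction by using the plane structure in an essential way rather than only for tie-breaking. At each hierarchical step one has a bipartite plane tree $S_n$ with primary/secondary classes of degrees $F_n$ and $F_{n-1}$; the cyclic orderings give a canonical $F_{n-1}$-colouring of the primary vertices (each secondary vertex sees all $F_{n-1}$ colours in cyclic order), and each colour class, viewed in a derived tree $S_{n+1}$, has isolation increased by at least $2$. This produces a deterministic, $\Gamma$-equivariant sequence of partitions with linearly growing isolation but doubly-exponentially growing fold; randomness is then used only to \emph{intermediate} between successive hierarchical levels so as to bring the fold down to at most $4$. If you want to repair your approach, you will need to replace the ``$H_W$ is bipartite'' step by something that genuinely exploits the cyclic orderings to split each class, not merely to break symmetry.
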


The statement `the law of $(\tilde \bbV_k)_{k\geq 0}$ is invariant under $\Gamma$' should be interpreted as follows: $Gamma$ naturally acts pointwise on subsets of $V(S)$, and hence also on partitions of $V(S)$.
Then for any $\gamma\in\Gamma$, the image of $\gamma$ on the partition has the same law on the partition.
(A partition is described as a subset of $V(S)^2$, with the product $\sigma$-algebra.)
We define a tree $\bbD$, such that $V(\bbD) = \bigcup_k \tilde \bbV_k$.
The root of $\bbD$ is the trivial partition $V(S)$, and the children of a vertex in $\tilde\bbV_k$ are the included parts of $\tilde\bbV_{k+1}$.
Thus $\bbD$ has bounded degrees.

\begin{proof}
  We begin by constructing a deterministic sequence of partitions which have isolation growing linearly and fold growing exponentially.
  We then construct a random sequence of partitions that intermediate between these partitions, which will satisfy the conclusions of \cref{prop:partition}.

Let $(F_n)_{n \geq 0}$ be the sequence defined recursively by $F_0=F_1=4$ and
\[F_{n+1}=F_n(F_{n-1}-1) \qquad n\geq 1.\]
Note that this sequence grows doubly-exponentially in $n$. In particular, $F_n \leq 4^{2^n}$ for every $n\geq 0$. 
We construct a sequence of partitions $(\bbV_k)_{k\geq1}$ of $V(S)$, which we call the \textbf{hierarchical partition},  with the following properties:
\begin{enumerate}
\item $(\bbV_k)_{k\geq1}$ is $\Gamma$-invariant in the sense that for any two vertices $u,v \in V(S)$, any $\gamma \in \Gamma$ and any $k\geq 1$, if $u,v \in V$ are in the same piece of the partition $\bbV_k$ (i.e., there exists $W\in \bbV_k$ such that $u,v\in W$) then $\gamma u,\gamma v$ are also in the same piece of the partition $\bbV_k$ (i.e., there exists $W' \in \bbV_k$ such that $\gamma u, \gamma v \in W$). 
\item   
  $\bbV_1(S)$ is the partition of $V(S)$ into its two bipartite classes.
\item For each $k\geq 1$, the partition $\bbV_{k+1}(S)$ is an $F_{k-1}$-fold refinement of the partition $\bbV_k$.
	% refines $\bbV_k$ in the sense that for every $W \in \bbV_{k+1}(S)$ there exists $W' \in \bbV_k$ such that $W \subset W'$. We denote this $W'$ by $\sigma(W)$. Moreover, $W \in \bbV_k$, there are exactly 
	% four 
	% \cdot 3^{k-1}
	% sets $W'\in \bbV_{k+1}(S)$ such that $\sigma(W')=W$.
	% , where $F_n$ is the $n$th hierarchical number.
\item Each $W \in \bbV_k$ has isolation at least $2k$.
	% \item If $k\geq 2$, and $W \in \bbV_k$, then for every $v \in \sigma(W) \setminus W$ and every vertex $w$ at distance $k-1$ from $v$, there exists exactly 
	% one 
	%  $u \in W$ such that the distance between $u$ and $v$ is $2k-2$ and the geodesic from $u$ to $v$ passes through $w$. Denote this vertex $f_k(v,w,W)$.
	% , and for each vertex $u \in W$ there are exactly 
\end{enumerate}

The hierarchical partition may be constructed recursively as follows.
Suppose that  $n\geq 1$, and that $S_n$ is the plane tree whose vertices are separated into bipartite classes $V_1$ and $V_2$ such that every vertex in $V_1$ has degree $F_n$ and that every vertex in $V_2$ has degree $F_{n-1}$. We call vertices in $V_1$ \textbf{primary} and vertices in $V_2$ \textbf{secondary}. Consider a coloring of the primary vertices $V_1$ with the property that for every secondary vertex $v\in V_2$, the vertices $u_1,\ldots,u_{F_{n-1}}$ appearing in clockwise order adjacent to $v$ in $T$ have colors $1,\ldots,F_{n-1}$ up to a cyclic shift. Such a coloring is easily seen to exist and is unique up to a cyclic shifts of the colors.
 % By partitioning the primary vertices according to the residue of their color modulo $4$, then modulo $16$, and so on up to $4^{F_{n-1}}$, we obtain a sequence of $F_{n-1}$ partitions of the primary of vertices of $S_n$, invariant under the automorphisms of $S$ considered as a bipartite plane tree, such that each partition in the sequence is a $4$-fold refinement of the previous partition in the sequence.

For each color $1 \leq i \leq d_2$, let $S_{n,i}$ be the tree with vertex set $V_1$ in which two vertices are connected by an edge if and only if their distance in $S_n$ is $2$ and one of them has color $i$. This tree inherits a plane structure from $S_{n}$. Let $V_1(T_i)$ be the subset of $V(S_i)=V_1(S)$ containing the color $i$ vertices and let $V_2(S_i)$ be the subset of $V(S_i)=V(S)$ containing the vertices with color other than $i$. It is easily verified that $V_1(S_i)$ and $V_2(S_i)$ are the two bipartite classes of $S_{n,i}$ and that vertices in these classes have degrees $F_n(F_{n-1}-1)=F_{n+1}$ and $F_n$ respectively, so that $S_{n,i}$ is isomorphic to $S_{n+1}$.
Moreover, the distance in $S_{n,i}$ between any two vertices in $V_1(S_{n,i})$ is at least two less than the distance of the corresponding vertices in $S_n$: 
Indeed, it is easily verified that the distance between $u,v \in V_1(S_{n,i})$ in $S_{n,i}$ is equal to their distance in $S_n$, minus the number of vertices in $V_1(S_{n,i})$ that are included in the geodesic between $u$ and $v$ in $S_n$ (which is at least $2$ due to the endpoints being in $V_1(S_{n,i})$). See Figure \ref{fig:geodesic}.

\begin{figure}[t]
\centering
\includegraphics{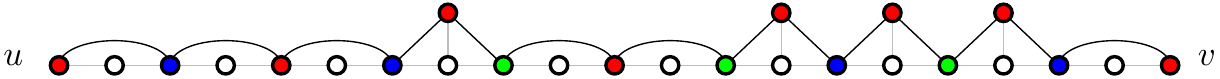}
\caption{If $u,v \in V_1(S_{n,i})$ and the geodesic between $u$ and $v$ in $S_n$ is given by the horizontal grey path, then the geodesic between $u$ and $v$ in $S_{n,i}$ is given by the black path.  Red vertices represent vertices in $V_1(S_{n,i})$, blue and green vertices represent vertices in $V_1(S_n)\setminus V_1(S_{n,i})$, white vertices represent vertices in $V_2(S_n)$. The length of the black  path is equal to the length of the grey path minus the number of red vertices. Grey curves are edges of $S_n$, black curves are edges of $S_{n,i}$.}
\label{fig:geodesic}
\end{figure}

 % suppose that $u,v \in V_1(T)$ have distance $2k$ in $T$. The geodesic between $u$ and $v$ passes through $k$ vertices of $V_2(T)$ and $k-1$ vertices of $V_1(T)$ other than $u$ and $v$. If 

%  if the geodesic connecting $u,v \in V_1(T)$ passes through $n$ vertices of $V_1(T_i)$ and $m$ vertices of $V_2(T_i)$, then the length of the path connecting $u$ and $v$ in $T_i$ is equal to 

 % (for typical pairs of vertices in $V_1(T_i)$ it will be smaller by a multiplicative constant; it will only be shorter by exactly $1$ if the vertices are joined in $T$ by a simple path that alternates between $V_1(T_i)$ and $V_2(T_i)$).

We apply this construction recursively, beginning with the $4$-regular tree $S=S_1$ separated into its two bipartite classes.  When we start step $n$ of the recursion, we
have constructed the sequence of partitions $(\bbV_k)_{k \leq n}$ and have given each $W \in \bbV_{n}$ the structure of the bipartite plane tree $S_n$ in such a way that the distance between any two vertices of $W$ in the associated copy of $S_n$ is at most their distance in $S_1$ minus $2(n-1)$. Given this data, we apply the above procedure to each of these copies of $S_n$ to complete the next stage of the recursion, obtaining a $F_{n-1}$-fold refinement $\bbV_{k+1}$ of $\bbV_k$. 
 % have partitioned the vertex set of 
% $S_1$ into 
% \[\prod_{i=0}^n 4^{F_i}\]
% copies of the bipartite plane tree $S_n$.
% After applying $n$ steps of the above procedure
 % we obtain for each $k \geq 1$ a family of bipartite plane trees whose primary and secondary vertices have degrees $4^{F_n}$ and $4^{F_{n-1}}$, respectively, and such that the primary vertices of the trees form a partition of $V(S)$. 
 This is the hierarchical partition: the above discussion implies inductively that it has the properties required above.  

It remains to modify this hierarchical partition to have bounded fold.
This is achieved by randomly adding further partitions that intermediate between $\bbV_n$ and $\bbV_{n+1}$. 
This is necessary, so that the construction in the next subsection gives a graph with unbounded degrees.

We define the \textbf{randomly intermediated hierarchical partition} $(\tilde \bbV_{k})_{k\geq0}$ of $S$ as follows.
For each $n \geq 0$, let $a_n = \lceil \log_4 F_n \rceil$, and let $b_n = \sum_{i=0}^n a_n$.
Note that $a_n \leq 2^n$ and hence $b_n \leq 2^{n+1}$ for every $n\geq 0$.
\begin{enumerate}
\item Let $\tilde \bbV_0=\bbV_0=\{V(S)\}$.
\item For each $n \geq 1$, let $\tilde \bbV_{b_n} = \bbV_n$. 
\item We construct the partitions $(\tilde \bbV_{b_n-k})_{k=1}^{a_n-1}$ recursively as follows: Given $\tilde \bbV_{b_n-k}$, for each set $W \in \tilde \bbV_{b_{n-1}}=\bbV_{n-1}$, choose uniformly at random a partition of the set $\{
  W' \in \tilde \bbV_{b_n-k} : W' \subseteq W
  \}$
  into sets that all have size four except possibly for one of the sets.
  These random choices are made independently of each other, and independent of all other randomness used in the construction. 
\end{enumerate}
The definition of $a_n$ and $b_n$ ensure that $\tilde \bbV_{n+1}$ is a refinement of $\tilde \bbV_{n}$ with fold bounded by $4$ for every $n\geq 1$.
Moreover, the sequence of random variables $(\tilde \bbV_k)_{k \geq0}$ is invariant \emph{in distribution} under $\Gamma$.
Finally, note that if we define $c_k$ to be maximal such that $b_{c_k} \leq k$ for each $k\geq 1$, then every set in $\tilde \bbV_k$ is $2c_k$ isolated for every $k\geq 0$, since every such set is contained in a set in $\bbV_{c_k}$.  Moreover, we have that 
\[
c_k \geq \lfloor \log_2k -1 \rfloor
 \]
for every $k\geq 1$.
\end{proof}

\subsection{Proof of \cref{thm:pcpu}}

We now use the randomly intermediated hierarchical partition whose existence is stipulated by \cref{prop:partition} to construct the example required by Theorem \ref{thm:pcpu}.
Let $q$ be such that $\theta_q(\Z^2)>3/4$, and let $m: \N \to \N\setminus \{0\}$ and $r:\N\to \N\setminus\{0\}$ be increasing functions such that
% We will take the functions $r$ and $m$ so that
\begin{align*}
2^{r(\ell)} \asymp \ell+1, \qquad 
% \intertext{and}
q^{m(\ell)} \asymp (\ell+1)^{-2} \quad \text{ and }  \quad 2^{2r(\ell)}q^{m(\ell)} \leq 10^{-4}.
\end{align*}
Suppose further that $r$ and $m$ have bounded increments. %in the in the sense that $r(\ell+1)\leq r(\ell)+C$ and $m(\ell+1)\leq m(\ell)+C$ for some constant $C$ and every $\ell \geq 0$.
For example, we can take
\[
r(\ell) = \left\lceil \frac{\log^+\ell}{\log 2}\right\rceil \qquad \text{ and } \qquad m(\ell) = \left \lceil \frac{2\log^+\ell + 4\log 10}{\log(1/q)}\right \rceil.
\]

Let $\bbT= \tilde{\bbT}^2(100,r)$ be the canopy tree of tori, and let $\bbG$ be obtained from $\bbT$ by replacing edges between different levels of $\bbT$ with paths of length $m$, similarly to the construction in the previous section. 
 For each vertex $v$ of $\bbG$, we write $|v|$ for the unique $\ell \geq 0$ such that either $v$ is a level-$\ell$ vertex of $\bbT$, or $v$ lies on the interior of one of the paths of length $m(\ell)$ 
 connecting level $\ell$ of $\bbT$ to level $\ell+1$ of $\bbT$ that is added when constructing $\bbG$ from $\bbT$. 

% Observe that, by induction, $|\bbV_k|=2\cdot 4^{k-1}$ if $k \geq 1$. 
% For each $v\in V$ and $k\geq0$, let $V_k(v)$ be the unique element of $\bbV_k$ containing $v$. 
Let $(\tilde \bbV_k)_{k\geq 0}$ be a random sequence of partitions of $V(S)$ satisfying the conclusions of \cref{prop:partition}, and $\bbD$ the associated tree.
Conditional on $(\tilde \bbV_k)_{k\geq 0}$, we define the graph $\bbH$ as follows.
 % be the graph with vertex set 
The vertex set $V(\bbH)$ is a subset of $\bbG\times S \times \bbD$ given by
\[
  \left \{(v_1,v_2,W) \in \bbG \times S \times \bbD \text{\ such that } |W|=|v_1| \right\}.
\]
Here, $|W|=k$ if $W\in\tilde\bbV_k$.
This construction is somewhat similar to Diestel-Leader graphs (more precisely, half of the Diestel-Leader graph), since the tree structure of $\bbG$ branches towards level $0$, and the tree $\bbD$ branches away from level $0$.

We call a vertex $(v_1,v_2,W)$ of $\bbH$ \textbf{type-1} if $v_2\in W$ and \textbf{type-2} otherwise.
We connect two vertices $(v_1,v_2,W_1)$ and $(v'_1,v'_2,W')$ of $\bbH$ by an edge if and only if one of the following hold:
\begin{enumerate}
% \item[(1).] $v_2 = v_2'$, $W=W'$, $v_1\in W$, and $v_1,v_1'$ are adjacent vertices in $\bbG(d,r,m)$ with $|v_1|=|v_1'|$, 
\item[(1).] 
  $(v_1,v_2,W_1)$ and $(v'_1,v'_2,W')$ are both type $1$, $v_2=v_2'$, and $v_1,v'_1$ are adjacent in $\bbG$, and $W,W'$ are adjacent in $\bbD$, or
% with $\left||v_1'|-|v_1'|\right|\leq 1$, or
\item[(2).]  $v_1=v_1'$, $W=W'$, and $v_2$ and $v_2'$ are adjacent in $S$.
\end{enumerate} 
We call an edge of $\bbH$ a \textbf{$\bbG$-edge} if its endpoints have the same $S$-coordinate (in which case they must both be type-$1$ vertices), and an $S$\textbf{-edge} otherwise (in which case its endpoints have the same $\bbG$-coordinate, and at least one of the vertices must be type-$2$).
Note that every connected component of the subgraph of $\bbH$ induced by the type-1 vertices (equivalently, spanned by the $\bbG$-edges) is isomorphic to $\bbG$. We call these \textbf{type-1 copies of $\bbG$ in $\bbH$.} Similarly, the type-$2$ edges span $\bbH$, and every connected component of the associated subgraph is isomorphic to the $4$-regular tree $S$. 

Let $\rho_1$ be a random root for the deterministic graph $\bbG$ chosen from the law $\mu_{\bbG}$, let $\rho_2$ be a fixed root vertex of the deterministic graph $S$, and let $W_\rho$ be chosen uniformly from $\tilde \bbV_{|\rho_1|}$. Let $\rho=(\rho_1,\rho_2,W_\rho)$. 
It follows by replacement (applied to the product $\bbG \times S$, which is unimodular and normalizable by e.g.\ \cite[Proposition 4.11]{AL07}) that $(\bbH,\rho)$ is a unimodular random rooted graph.
% Let $\rho=(v_1,v_2, W\in \bb\tilde |V|_{|v_1}}$ be a random vertex of $\bbH$ 

We call a vertex $(v_1,v_2,W)$ of $\bbH$ \textbf{type-1} if $v_2\in W$ and \textbf{type-2} otherwise.
 We call an edge of $\bbH$ 
a \textbf{$\bbG$-edge} if its endpoints have the same $S$-coordinate (in which case they must both be type-$1$ vertices), and an $S$\textbf{-edge} otherwise (in which case its endpoints have the same $\bbG$-coordinate, and at least one of the vertices must be type-$2$).
Note that every connected component of the subgraph of $\bbH$ induced by the type-1 vertices (equivalently, spanned by the $\bbG$-edges) is isomorphic to $\bbG$. We call these \textbf{type-1 copies of $\bbG$ in $\bbH$.}

Finally, given a constant $M \geq 1$, we define the graph $\tilde \bbH(M)$ by replacing each of the $S$-edges of $\bbH$ with a path of length $M$. It follows by replacement that $\tilde \bbH(M)$ can be rooted in such a way that it is a unimodular random rooted graph. Theorem \ref{thm:pcpu} therefore follows immediately from the following proposition.

\begin{proposition}
\label{prop:pcpu}
% Let $q$ be such that $\theta_q(\Z^2) > 3/4$, and let $m:\N\setminus \{0\}\to\N\setminus \{0\}$ be chosen so that
% Let
% $d(\ell)\equiv 10$, $r(\ell)=g(\ell)=\ell$, and
% \begin{align*}
% m(\ell)&=\left\lceil 
% % \frac{\log \ell + \gamma \log \log \ell}{}
% \frac{(\ell+1) \log 4 + \log 100 }{\log (1/q)}
% \right\rceil,
% \intertext{so that}
% q^{m(\ell)} &\asymp 4^{-\ell-1} \quad \text{ and } \quad q^{m(\ell)} \leq 10^{-4} \cdot 4^{-\ell-1}.
% \intertext{and}
% \end{align*}
% \]
The  random graph $\tilde\bbH(M)$ described  has bounded degrees and is nonamenable. If $M$ is sufficiently large then   
% If $M$ is sufficiently large, then 
$p_c(\tilde \bbH)=p_u(\tilde \bbH)=q$.
\end{proposition}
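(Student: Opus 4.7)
Bounded degrees and nonamenability are immediate from the construction and do not require $M$ to be large. Each non-subdivision vertex of $\tilde\bbH(M)$ has at most $\deg_\bbG(v_1)+4\leq C$ incident edges and every interior vertex of a subdivided $S$-path has degree $2$. For nonamenability, observe that the $S$-edges span a disjoint union of copies of the $4$-regular tree $S$ with every edge subdivided $M$ times; each such copy has positive Cheeger constant $h_M>0$. Since every non-interior vertex has at least $4$ of its $\leq C+4$ incident edges in this $S$-substructure and every interior vertex has both its edges there, a per-layer Cheeger decomposition of any finite $K\subseteq V(\tilde\bbH(M))$ yields $|\partial_E K|\geq c\sum_{v\in K}\deg(v)$ for some $c=c(M,C)>0$.

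For $p_c(\tilde\bbH(M))=q$, the upper bound is easy: for each realisation of the random partitions, every type-$1$ copy of $\bbG$ is a subgraph of $\tilde\bbH(M)$ isomorphic to $\bbG$, and restricting $p$-Bernoulli percolation to its $\bbG$-edges is exactly $\bbG[p]$; Proposition \ref{prop:criticalbdddegree} produces an infinite cluster in $\bbG[q]$, hence in $\tilde\bbH(M)[q]$. For the matching lower bound I would fix $p<q$ and show that the cluster of $\rho$ in $\tilde\bbH(M)[p]$ is almost surely finite by exploring it in alternating stages: (i) reveal the $\bbG$-cluster $C$ of the currently exposed vertices, which is finite almost surely at $p<p_c(\bbG)=q$, and (ii) reveal the $S$-paths leaving $C$, each open independently with probability $p^M$. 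This dominates the $\tilde\bbH$-cluster by a cluster-of-clusters branching process whose mean offspring can be made subcritical by choosing $M$ sufficiently large. The main obstacle is that the expected $\bbG$-cluster size $\chi_\bbG(p)$ diverges as $p\nearrow q$, so the naive bound $4\chi_\bbG(p)p^M<1$ cannot hold uniformly in $p<q$ for any fixed $M$; I would resolve this with a level-resolved estimate exploiting the rapid decay of open inter-level paths in $\bbG$ (the expected number from a level-$\ell$ torus to its parent is $4^{\ell+1}p^{m(\ell)}\asymp(\ell+1)^24^{-\delta\ell}$ at $p=q^{1+\delta}$, which is geometrically summable), bounding the total $S$-emanations from $C$ uniformly in $p<q$ once $M$ exceeds a threshold depending only on $q$.

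For $p_u(\tilde\bbH(M))\leq q$, fix $p>q$. Within each type-$1$ copy of $\bbG$, percolation is supercritical on every torus (since $\theta_p(\Z^2)\geq\theta_q(\Z^2)>3/4$) with a unique giant component, and the expected number of open inter-level paths $4^{\ell+1}p^{m(\ell)}$ grows with $\ell$; so Proposition \ref{lem:supercriticaltoruspercolation} and Borel--Cantelli give a unique infinite cluster in each type-$1$ copy containing the giant component of every sufficiently-high-level torus. To merge these across different $v_2\in V(S)$, I would exploit the coarseness of $\tilde\bbV_0=\{V(S)\}$: every vertex of the level-$0$ $S$-layer is type-$1$, so two type-$1$ copies at $S$-adjacent $v_2,v_2'$ can be connected by an open $S$-path through any leaf torus of $\bbG$. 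Fix a high level $L$ and a level-$L$ vertex $v_L$; conditional on $v_L$ lying in the infinite clusters of both the $v_2$- and $v_2'$-copies (positive probability by the supercritical argument above), the events, indexed by leaves $v_1^0$ in distinct canopy-subtrees below $v_L$, that there are open $\bbG$-descents in both copies from $v_L$ to $v_1^0$ together with an open $S$-path between $(v_1^0,v_2,V(S))$ and $(v_1^0,v_2',V(S))$ are independent across subtrees and uniformly bounded below in probability; Borel--Cantelli then yields, almost surely, an open connection between the two type-$1$ infinite clusters. A countable union bound over $S$-adjacent pairs, combined with the fact that any infinite $\tilde\bbH$-cluster must intersect the infinite $\bbG$-cluster of some type-$1$ copy (via $\theta_\bbG(v,p)>0$ for every $v$ at $p>q$), gives uniqueness. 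The main obstacle here is this last step: establishing $\theta_\bbG(v,p)>0$ at every vertex of $\bbG$ at $p>q$, which requires a mild downward extension of the argument in Proposition \ref{prop:criticalbdddegree}, propagating the high-level infinite cluster back down through the canopy tree to every vertex.
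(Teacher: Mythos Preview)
Your proposal misses the central mechanism of the construction, and as a result the argument for $p_c(\tilde\bbH)\geq q$ cannot be completed along the lines you sketch.

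First, you are working with the wrong $\bbG$. The graph $\bbG$ in this section is built from $\tilde\bbT^2(100,r)$ with $2^{r(\ell)}\asymp\ell+1$ and $q^{m(\ell)}\asymp(\ell+1)^{-2}$, so the level-$\ell$ torus has only polynomially many ($\asymp(\ell+1)^2$) vertices and the expected number of open inter-level paths at $p=q$ is \emph{bounded} (indeed $\leq 10^{-4}$) rather than $\asymp(\ell+1)^2$. This $\bbG$ is not the one from \cref{prop:criticalbdddegree}; in particular there is no infinite cluster in $\bbG[q]$, and the level-resolved susceptibility satisfies $\chi(\ell,k)\leq C(k+1)^2$ \emph{at} $p=q$. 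Since $\sum_k(k+1)^2=\infty$, your cluster-of-clusters branching process still has infinite mean offspring at and near $q$, and no amount of decay ``in $\bbG$'' alone repairs this: your proposed geometric decay $4^{-\delta\ell}$ vanishes as $\delta\to 0$.

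The ingredient you have not used at all is the \emph{isolation property} of the randomly intermediated hierarchical partition (\cref{prop:partition}(3)): a type-$1$ vertex at level $j$ lies in a class of $\tilde\bbV_j$ whose points are at pairwise $S$-distance $\geq 2\lfloor\log_2 j-1\rfloor$, so every traversal (a simple $S$-path between type-$1$ vertices through type-$2$ vertices) emanating from it has length $\gtrsim\log j$. In $\tilde\bbH(M)$ this length is multiplied by $M$, and the expected number of open traversals out of a level-$j$ vertex is therefore at most $j^{-\alpha(M)}$ with $\alpha(M)\to\infty$. The paper exploits this by working directly at $p=q$: decomposing simple open paths by their number of traversals, applying the BK inequality, and showing inductively that $\tilde\chi_i(\ell,k)\leq C2^{-i}(k+1)^2$ once $M$ is large enough that $\sum_j(j+1)^2 j^{-\alpha(M)}$ is small. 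The compensation is between the polynomial growth $(k+1)^2$ of $\chi(\ell,k)$ and the polynomial decay $j^{-\alpha(M)}$ of open traversals, and the latter comes entirely from the partition's isolation, not from anything in $\bbG$.

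Your $p_u\leq q$ argument is also incomplete at the final step. Merging the type-$1$ infinite clusters across adjacent $v_2$'s via level-$0$ $S$-paths is plausible (and can be made to work), but your claim that ``any infinite $\tilde\bbH$-cluster must intersect the infinite $\bbG$-cluster of some type-$1$ copy'' does not follow from $\theta_\bbG(v,p)>0$: an infinite $\tilde\bbH[p]$-cluster could in principle meet each type-$1$ copy only in finite $\bbG$-components, hopping between copies via $S$-edges. The paper closes this gap with the indistinguishability theorem of Lyons--Schramm (as extended to unimodular random graphs by Aldous--Lyons): the property ``the $\bbG$-edges within the cluster have an infinite component'' is an invariant property that distinguishes the merged cluster from any hypothetical other infinite cluster, which is forbidden.
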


% For an example of a function $m$ of the form required by Proposition \ref{prop:pcpu}, we can take
% \[
% m(\ell)=\left\lceil 
% % \frac{\log \ell + \gamma \log \log \ell}{}
% \frac{(\ell+1) \log 4 + 4\log 10 }{\log (1/q)}
% \right\rceil.
% \]
The proof of Proposition \ref{prop:pcpu} will apply the notion of disjoint occurrence and the BK inequality~\cite{van1985inequalities}, see \cite[Section 2.3]{grimmett2010percolation} for background. 

\begin{proof}
% Write $\bbH = \bbH(m)$, $\tilde \bbH = \tilde \bbH(m,M)$, $\bbG=\bbG(100,2,m)$, $\bbT=\bbT^2(100,2)$, and $T=T_{100}$.
Write $\tilde \bbH=\tilde \bbH(M)$. 
% The statement concerning unimodularity and normalizability of $\tilde \bbH$ follows by applying replacement to the product $\bbG \times S$, using the automorphism invariance property of the hierarchical partition of $S$. 
 Moreover, it is immediate from the assumption that $r$ and $m$ have bounded increments that $\tilde \bbH$ has bounded degrees.

It is easily seen that stretching some edges by a bounded amount preserves nonamenability (indeed, nonamenability is stable under rough isometry), and so to prove that $\tilde \bbH$ is nonamenable it suffices to prove that $\bbH$ is nonamenable.
% To see that $\tilde \bbH$ is nonamenable, 
Observe that we can partition the vertex set of $\bbH$ into sets $\{V_i :i \in I\}$ whose induced subgraphs are copies of the $4$-regular tree $S$. 
Thus, given any finite set of vertices $K$ in $\bbH$, we can write $K = \bigcup_{i \in I} K_i$ where $K_i = K \cap V_i$. Since the subgraph of $\bbH$ induced by $V_i$ is a $4$-regular tree for every $i \in I$, it follows that the external edge boundary of $K_i$ in the subgraph induced by $V_i$ has size at least $|K_i|$ for every $i \in I$, and so we have that
\[
|\partial_E K| \geq \sum_{i \in I}|K_i| = |K|,
\] 
and hence that
\[
\frac{|\partial_E K|}{\sum_{v\in K} \deg(v)} \geq \frac{1}{\max_{v\in \bbH} \deg(v)}
\]
for every finite set of vertices $K$, so that $\bbH$ is nonamenable as claimed.

We now prove the statements concerning percolation on $\tilde \bbH$.
 It follows similarly to the proof of \cref{prop:criticalbdddegree} that $p_c(\bbG)=q$. 
% We first claim that $p_u(\bbH) \leq p_c(\bbG)$. 

\begin{lemma}
$p_u(\tilde \bbH) \leq p_c(\bbG)=q$.
\end{lemma}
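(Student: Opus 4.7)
Fix $p > q$. The first step is to analyze percolation within each type-$1$ copy $\bbG_{v_2}$ (indexed by $v_2 \in V(S)$) of $\bbG$ sitting inside $\bbH$, and hence inside $\tilde\bbH$. The $\bbG$-edges of $\bbH$ with second coordinate $v_2$ form an independent copy of $\bbG[p]$, so by $p > p_c(\bbG)=q$ (established analogously to \cref{prop:criticalbdddegree}) there is an infinite cluster $\cC_{v_2}^*$ almost surely, built from the unique giant components of each sufficiently-high-level torus in $\bbG$. A first-moment computation, using that non-giant components in the level-$\ell$ torus have size $O(\log \ell)$ while the edges going up through the stretched paths of length $m(\ell)$ are open with probability $p^{m(\ell)}$, shows that almost surely no non-giant component at high level can extend further up without joining the giant of its parent torus. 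Consequently $\cC_{v_2}^*$ is the unique infinite cluster of this copy of $\bbG$.

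Next, I would merge the clusters $\cC_{v_2}^*$ across different $v_2 \in V(S)$ into a single infinite cluster $\cC^* \subseteq \tilde\bbH[p]$. By concatenation along $S$-geodesics it suffices to show that $\cC_{v_2}^*$ and $\cC_{v_2'}^*$ lie in a common component whenever $v_2 \sim v_2'$ in $S$. The key observation is that at level $0$ of $T_d$ the hierarchical partition is trivial, $\tilde\bbV_0 = \{V(S)\}$, so for every leaf $v_1 \in T_d$ and every $x$ in its (finite) level-$0$ torus, both $((v_1,x),v_2,V(S))$ and $((v_1,x),v_2',V(S))$ are type-$1$ vertices joined by a single $S$-edge in $\bbH$, which is a path of length $M$ in $\tilde\bbH$. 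Three independent favorable events hold with positive probability bounded below at each such leaf: the $M$-path is open ($p^M > 0$), the first endpoint lies in $\cC_{v_2}^*$, and the second endpoint lies in $\cC_{v_2'}^*$; independence of the last two comes from the disjointness of the edge sets of $\bbG_{v_2}$ and $\bbG_{v_2'}$. For a sparse subsequence of leaves chosen in disjoint branches of $T_d$ these favorable events become mutually independent, so Borel--Cantelli produces an open bridging path almost surely, connecting $\cC_{v_2}^*$ to $\cC_{v_2'}^*$.

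Finally, any hypothetical infinite cluster $\mathcal{I}$ of $\tilde\bbH[p]$ disjoint from $\cC^*$ would intersect each $\bbG_{v_2}$ in only finitely many vertices (since $\cC_{v_2}^*$ is the unique infinite cluster of $\bbG_{v_2}$), and so would have to be infinite purely through the $S$-direction by stringing together infinitely many finite $\bbG_{v_2}$-pieces along open $S$-paths. My plan is to rule this out by taking $M$ large enough that $p^M$ lies strictly below the percolation threshold of the $4$-regular tree, combined with uniform exponential tail bounds on non-giant component sizes in supercritical $\Z^2$ percolation, and invoking the isolation property from \cref{prop:partition} to reduce this residual process to a genuinely subcritical branching process on $\bbD \times S$. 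I expect this last paragraph to be the main technical obstacle: one must jointly control the $\bbG$- and $S$-randomness and carefully track how finite $\bbG_{v_2}$-pieces at different levels can be chained together, and this is the only place where the large value of $M$ and the careful isolation-versus-fold tradeoff in the hierarchical partition genuinely come into play.
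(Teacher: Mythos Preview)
Your first two steps are broadly in the right spirit and parallel the paper's, though the ``mutual independence'' in step~2 is not quite correct: the events that the two endpoints lie in their respective giants depend on $\bbG$-edges in the shared ancestry of the leaves. What actually works is conditional independence of the length-$M$ bridge paths given the $\bbG$-edge configuration, together with the (easy) fact that each $\cC_{v_2}^*$ contains infinitely many level-$0$ vertices almost surely. The paper handles this step more softly via insertion tolerance and the mass-transport principle, and handles your step~1 by noting that $\bbG$ is invariantly amenable so that $p_u(\bbG)=p_c(\bbG)$.

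The genuine gap is in step~3. The lemma asserts $p_u(\tilde\bbH)\leq q$, so you must establish uniqueness of the infinite cluster for \emph{every} $p>q$, including $p$ arbitrarily close to $1$. For such $p$ one has $p^M$ close to $1$ regardless of the fixed constant $M$, so the $S$-edge process on the $4$-regular tree is wildly supercritical and no subcritical-branching-process comparison is available. You are free to choose $M$ once in the construction of $\tilde\bbH(M)$, but not to let it depend on $p$; and even for fixed $p>q$ you cannot take $M$ large after the fact. In fact the large $M$ and the isolation property of the hierarchical partition are used in the paper only for the \emph{opposite} inequality $p_c(\tilde\bbH)\geq q$, where one works at the single value $p=q$ and controls expected cluster sizes; they play no role in the present lemma.

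The paper's way around this is to invoke cluster indistinguishability (Lyons--Schramm, extended to unimodular random rooted graphs by Aldous--Lyons): any infinite cluster disjoint from $\cC^*$ would be distinguished from $\cC^*$ by the invariant property ``the $\bbG$-edges inside the cluster span only finite components'', which is impossible. This soft argument requires no quantitative control whatsoever and works uniformly in $p>q$ and in $M$; it is exactly the ingredient your plan is missing.
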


\begin{proof}
  The proof is an an easy modification of the argument of Lyons and Schramm \cite[Theorem 6.12]{LS99}, and applies the main theorem of that paper as generalised to unimodular random graphs by Aldous and Lyons \cite[6.15]{AL07}; see also Theorem 6.17 of that paper.
  The sketch of the argument is as follows: It is easily verified that $\bbG$ is invariantly amenable (see e.g.\ \cite[Section 8]{AL07} and \cite{unimodular2}), so that $p_u(\bbG)=p_c(\bbG)=q$ by \cite[Corollary 6.11, 8.13]{AL07}.
  Thus, for every $p>q$, every type-$1$ copy of $\bbG$ in $\tilde \bbH$ contains a unique infinite open cluster almost surely.
  It is easy to deduce using insertion tolerance and the mass-transport principle that these clusters must all be connected to each other by open edges in $\tilde \bbH$.
  Finally, indistinguishability implies that there cannot be any \emph{other} infinite open cluster in $\tilde \bbH$, since the $\bbG$-edges within any such cluster would only have finite connected components.
\end{proof}

 To complete the proof, it suffices to show that $p_c(\tilde \bbH)\geq q$ when $M$ is sufficiently large. 
Let $V_1(\bbH)$ be the set of type-1 vertices of $\bbH$.
Define $\chi(\ell,k)$ and $\tilde \chi(\ell,k)$ for $\ell,k \geq 0$ by
\[\chi(\ell,k) = \sup_{v \in V(\bbG), |v|=\ell} \sum_{u \in V(\bbG), |u|=k} \P(v \leftrightarrow u \text{ in } \bbG[q]).\]
and
\[\tilde \chi(\ell,k) = \sup_{v \in V_1( \bbH), |v|=\ell} \sum_{u \in V_1(\bbH), |u|=k}  \P(v \leftrightarrow u \text{ in } \tilde \bbH[q]).\]
It follows easily by mass transport and insertion tolerance that every infinite cluster of $\tilde \bbH[q]$ contains infinitely many type-1 vertices of $\bbH$, and so to prove that $p_c(\tilde \bbH) \geq q$ it suffices to prove that $\sum_{k\geq0} \tilde \chi(\ell,k)<\infty$ for some (and hence every) $\ell \geq 0$.
As a first step we bound the susceptibility in $\bbG$.

\begin{lemma}\label{lem:C}
  There exists a constant $C$ such that
  \[\chi(\ell,k) \leq C (k+1)^2\]
  for every $\ell,k\geq 0$. 
\end{lemma}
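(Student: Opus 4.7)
It suffices to fix $v\in\bbG$ with $|v|=\ell$ and bound $\sum_{u:|u|=k}\P(v\leftrightarrow u\text{ in }\bbG[q])$ by $C(k+1)^2$. I split the vertices at height $k$ into torus vertices (lying in some torus of $\bbT$ at level $k$) and subdivision vertices (on the interior of a length-$m(k)$ subdivision path between levels $k$ and $k+1$), and handle the two contributions separately. The core of the argument is a tree-indexed recursion for the expected cluster mass inside each torus.

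For each torus $T$ of $\bbT$, let $\alpha_v(T):=\E[|C(v)\cap T|]$. Write $T_v$ for the torus containing $v$, and more generally $T_h$ for a torus at level $h$ lying on the tree-path in $T_d$ from $T_v$ towards some fixed level-$k$ target. Since $T_d$ is a tree, any open path from $v$ to a vertex of $T_{h+1}$ must enter $T_{h+1}$ for the first time via one of the $2^{2r(h+1)}$ subdivision paths from $T_h$ to $T_{h+1}$. Letting $E$ denote the set of first-entry points, one gets $\alpha_v(T_{h+1})\leq |T_{h+1}|\cdot\E|E|$. A union bound over $y'\in T_{h+1}$, combined with a BK-type splitting of $\{y'\in E\}$ according to whether the specific subdivision path from $y'$ to its preimage in $T_h$ is itself used by the witness path $v\leftrightarrow y'$, yields
\[
\E|E|\leq q^{m(h)}\bigl(2^{2(r(h+1)-r(h))}\alpha_v(T_h)+\alpha_v(T_{h+1})\bigr).
\]
Using the hypothesis $2^{2r(h)}q^{m(h)}\leq 10^{-4}$ together with the bounded-increment property of $r$ (so that $2^{2(r(h+1)-r(h))}=O(1)$), the self-referential $\alpha_v(T_{h+1})$ term can be absorbed into the left-hand side, giving $\alpha_v(T_{h+1})\leq C_\uparrow\,\alpha_v(T_h)$ with $C_\uparrow$ a small constant. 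An entirely analogous down-step argument yields $\alpha_v(T_h)\leq C_\downarrow\,\alpha_v(T_{h+1})$ for each specific child torus $T_h$ of $T_{h+1}$, with $C_\downarrow=O(10^{-4})$.

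Iterating the two inequalities along the tree-path from $T_v$ to a level-$k$ torus $T$ whose least common ancestor with $T_v$ is at height $h^*\geq\max(\ell,k)$ gives $\alpha_v(T)\leq C_\uparrow^{h^*-\ell}C_\downarrow^{h^*-k}\cdot 2^{2r(\ell)}$. Since there are at most $d^{h^*-k}=100^{h^*-k}$ level-$k$ tori with common ancestor at height $h^*$, summing gives a double geometric sum which converges because $dC_\downarrow<1$ and $C_\uparrow<1$; a short case analysis on $\ell\gtrless k$, using $(\ell+1)^2 C_\uparrow^{\ell-k}\leq C(k+1)^2$ when $\ell\geq k$, delivers $\sum_{T:|T|=k}\alpha_v(T)\leq C(k+1)^2$. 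The subdivision-vertex contribution is handled similarly: a subdivision vertex at distance $j$ along a length-$m(k)$ path from $x\in T$ (level $k$) to $y\in T'$ (level $k+1$) lies in $C(v)$ only if either $x\in C(v)$ and its first $j$ edges are open, or $y\in C(v)$ and its last $m(k)-j$ edges are open; summing the geometric series in $j$ over subdivision paths and applying the torus bound above yields another $O((k+1)^2)$ contribution.

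The principal technical obstacle is a rigorous justification of the self-referential recursion for $\alpha_v(T_{h+1})$: open paths in $\bbG[q]$ need not make monotone tree-progress, so care is required to isolate disjoint-edge witnesses for the BK step. It is precisely the smallness $2^{2r(h)}q^{m(h)}\leq 10^{-4}$ that is strong enough both to absorb the self-referential term and to make the iterated geometric sums convergent with $dC_\downarrow<1$.
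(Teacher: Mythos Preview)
Your approach is correct and is essentially the same as the paper's: both exploit the tree structure of $T_d$ to get exponential decay in the tree distance between tori, and then sum a geometric series indexed by the height $h^*$ of the common ancestor. The paper phrases this more directly: it simply observes that for adjacent tori one has $\P(\Lambda_x\leftrightarrow\Lambda_y)\le 2^{2r(|x|)}q^{m(|x|)}\le 10^{-4}$ by a plain union bound over subdivision paths, and then by independence (the subdivision paths at different tree-edges use disjoint edge sets) iterates to $\P(\Lambda_x\leftrightarrow\Lambda_w)\le 10^{-4\,d_T(x,w)}$. Multiplying by $|V_w|\asymp(k+1)^2$ and summing over $w$ yields the result in a few lines, with no BK and no self-referential term.

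Your recursion on $\alpha_v(T)$ is the same exponential-decay mechanism dressed differently; in particular, your ``self-referential'' term $\alpha_v(T_{h+1})$ in the bound for $\E|E|$ is actually spurious. If $y'\in E$ is a first-entry point into $T_{h+1}$, the witnessing path \emph{must} arrive along the unique subdivision path from $\pi(y')\in T_h$, so only case~(a) of your BK split occurs, giving $\E|E|\le q^{m(h)}2^{2(r(h+1)-r(h))}\alpha_v(T_h)$ directly. This removes what you flag as the ``principal technical obstacle'' and brings your argument into alignment with the paper's.
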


Note that the choice of $M$ does not affect the definition of $\chi(\ell,k)$, and so the constant $C$ here does not depend on the choice of $M$.
A vertex of $\bbG$ at level at most $k$ has a good chance of being connected in $\bbG[q]$ to the giant component in a torus at level $k$, and therefore the dependence on $k$ cannot be improved here.

\begin{proof}
For each vertex $u$ in $\bbG$, let $t(u)$ be the associated vertex of the canopy tree $T$. Similarly, for each $x\in T$ let $\Lambda_x$ be the associated torus in $\bbT$, and let $V_x=\{ v \in V(\bbG) : x(v)=x\}$ be the associated set of vertices of $\bbG$.  
As in the proof of \cref{prop:criticalbdddegree}, if $y$ is the parent of $x$ in $T$, then the probability that $\Lambda_x$ is connected to $\Lambda_y$ by an open path in $\bbG[q]$ is at most
\[
2^{2r(|x|)} q^{m(|x|)} \leq 10^{-4}.
\]
Thus, if $x$ and $y$ are vertices of $T$ whose most recent common ancestor has height $n$, then the probability $\Lambda_x$ is connected to $\Lambda_y$ in $\bbG[q]$ is at most
$10^{-8n+4|x|+4|y|}$.

Let $x \in V(T)$, let $y$ be the parent of $x$ in $T$, and let $u \in V_x$. Then
\begin{align*}
\sum_{u \in V(\bbG): |u|=k} \P\left(v \leftrightarrow u \text{ in }\bbG[q]\right)
&\leq \sum_{n \geq \ell \vee k} \sum_{|w|=k, |w \wedge x| =n} \P\left(\Lambda_x \cup \Lambda_{\sigma(x)} \leftrightarrow \Lambda_w \cup \Lambda_{\sigma(w)} \text{ in }\bbG[q]\right) |V_w|\\
&\leq C 
\sum_{n \geq \ell \vee k} \sum_{|w|=k, |w \wedge x| =n} 10^{-8n+4k+4\ell} |V_w|\\
 &\leq C' \sum_{n\geq \ell \vee k} 10^{-8n+4\ell+4k} 10^{2n-2k} (k+1)^2\\
% &\leq C'' (k+1)^2 10^{-6(\ell \vee k)+4\ell +2k}
 &\leq 2C' (k+1)^2
\end{align*}
as claimed, where $C,C'$ are constants.
\end{proof}

We now apply \cref{lem:C} to prove that $\sum_{k \geq0} \tilde \chi(\ell,k)<\infty$. 
Observe that we may consider percolation on $\tilde \bbH$ as an inhomogeneous percolation on $\bbH$ in which every $\bbG$-edge of $\bbH$ is open with probability $q$, and every $S$-edge of $\bbH$ is open with probability $q^M$. We will work with this equivalent model for the rest of the proof.

We define a \textbf{traversal} in $\tilde \bbH$ to be a simple path in $\tilde \bbH$ that starts and ends at type-1 vertices, while every vertex in its interior is a type-2 vertex.
Observe that every traversal uses only $S$-edges, and that every simple path in $\tilde \bbH$ that starts and ends at type-1 vertices can be written uniquely as a concatenation of traversals and $\bbG$-edges.

 For each two type 1 vertices $u,v$ in $\tilde \bbH$ let $\tau(u,v)$ be the probability that $u$ and $v$ are connected by an open path. Let $\sA_i(u,v)$ be the event that $u$ and $v$ are connected by a simple open path containing exactly $i$ traversals, let $\tau_i(u,v)$ be the probability of this event, and let
\[
\tilde \chi_i(\ell,k) = \sup_{|u|=\ell} \sum_{|v|=k} \tau_i(u,v).
\] 
We have that
% \[
% \tilde \chi_0(\ell,k) = \chi(\ell,k)
% \]
% and
$\tau(u,v) \leq \sum_{i\geq0} \tau_i(u,v)$ and hence that
\[
\tilde \chi(\ell,k) \leq \sum_{i\geq0} \tilde \chi_i (\ell,k).
\]
Furthermore, $\tau_0(u,v)$ is positive if and only if $u$ and $v$ are in the same type-$1$ copy of $\bbG$, and in this case it is equal to the probability that they are connected by an open path in this copy.

Let $u,v$ be vertices of $\tilde \bbH$ with $|u|=\ell$, $|v|=k$, and let $i\geq 1$. For each type 1 vertex $w$, let $\operatorname{Tr}_w$ be the set of traversals starting at $w$. Given a traversal $t\in \operatorname{Tr}_w$, we write $t^+$ for the type-$1$ vertex at the other end of $t$.

Summing over possible choices of the $i$th traversal along a simple open path from $u$ to $v$ and applying the BK inequality, we obtain that
\begin{align}
\tau_i(u,v) &\leq  \sum_{j \geq 0} \sum_{w \in V_1(\bbH), |w|=j} \sum_{t\in \operatorname{Tr}_w } \P(\sA_{i-1}(u,w) \circ \{ t \text{ open} \} \circ \sA_{0}(t^+,v))
\nonumber
\\
&\leq \sum_{j \geq0}  \sum_{w \in V_1(\bbH), |w|=j} \sum_{t\in \operatorname{Tr}_w }  \tau_{i-1}(u,w)\tau_0(t^+,v) \P(t \text{ open}).
\label{eq:BK}
\end{align}

In $q^M$-percolation on the $4$-regular tree, the expected number of vertices that have distance at least $k$ from the root and are connected to the root by an open path is equal to
\[
\sum_{\ell \geq k} 3 \cdot 4^{k-1} (q^M)^k \leq \frac{(4q^M)^k}{1-4q^M}.
\]
Furthermore, by the isolation property of the hierarchical partition, for each type-$1$ vertex $w$ of $\bbH$, every traversal in $\operatorname{Tr}_w$ has length at least $2 \log^+|w|/\log 2$. Thus, we deduce that
 % we have that 
\begin{equation}
\label{eq:trav}
\sum_{t\in \operatorname{Tr}_w}
\P(t \text{ open}) \leq \frac{(4q^M)^{2 \log^+|w|/\log 2}}{1-4q^{M}}.\end{equation}

Thus, substituting \eqref{eq:trav} into \eqref{eq:BK} and summing over $v$, we obtain that
\begin{align*}
\sum_{|v|=k} \tau_i(u,v) 
% \sum_{j \geq0}  \sum_{w \in V_1(\bbH), |w|=j} \sum_{t\in \operatorname{Tr}_w }  q^M \tau_{i-1}(u,w) \sum_{|v|=k } \tau_0(t^+,v)
& \leq 4 \sum_{j \geq0} \tilde \chi_{i-1}(\ell,j) \chi (j,k) \frac{(4q^M)^{2 \log^+j/\log 2}}{1-4q^{M}} 
 % \frac{C'}{1-4q^M}  \sum_{j \geq0} \tilde \chi_{i-1}(\ell,j) \chi (j,k) q^{(M-1)r(j)}\\
\\&\leq \frac{C'}{1-4q^{M}}  \sum_{j \geq0} \tilde \chi_{i-1}(\ell,j) \chi (j,k) 
(j \vee e)^{-\alpha(M)}
\end{align*}
and hence that
\begin{equation}
\label{eq:induction}
\tilde \chi_i(\ell,k) \leq \frac{C'}{1-4q^{M}}\sum_{j \geq0} (j\vee e)^{-\alpha(M)} \tilde \chi_{i-1}(\ell,j) \chi (j,k),
\end{equation}
where \[\alpha(M) = \frac{2M\log(1/q)}{\log2}-4\] and $C'$ is a constant.

Take $M$ sufficiently large that
\[
% 4C q^M \sum_{j\geq0} 10^{4j} \cdot 3^j \cdot q^{Mj} 
\frac{2 C \cdot C'}{1-4q^{M}} \sum_{j\geq0} (j\vee e)^{-\alpha(M)}(j+1)^2
\leq 1/2,\] where $C'$ is the constant above, $C$ is the constant from \cref{lem:C}. 
We now prove by induction on $i$ that, for this choice of $M$,
\begin{equation}
\label{eq:induction2}
\tilde \chi_i(\ell,k) \leq C 2^{-i} (k+1)^2 
\end{equation}
for every $i\geq 0$ and $\ell,k\geq0$. The case $i=0$ follows from Lemma \ref{lem:C}. If $i\geq 1$, then \eqref{eq:induction} and the induction hypothesis yield that
\begin{align*}
\tilde \chi_i (\ell,k) &\leq \frac{2^{-i+1} C^2\cdot C'}{1-4q^{M}} \sum_{j\geq0} (k+1)^2 (j\vee e)^{-\alpha(M)}(j+1)^2,
% &\leq 4C^22^{-i+1} q^M 10^{-2\ell - 2k}\sum_{j\geq 0} 10^{4j} \cdot 3^j \cdot q^{Mj}.
\end{align*}
and our choice of $M$ yields that
\[
\tilde \chi_{i}(\ell,k) \leq C 2^{-i} (k+1)^2
\]
as claimed. This completes the proof of \eqref{eq:induction2}.

We conclude the proof by summing over $i$ and $k$ to deduce that $\tilde \chi(\ell,k)<\infty$ for every $\ell,k\geq 0$ as claimed.
\end{proof}
\subsection*{Acknowledgments}
% TH was supported by internships at Microsoft Research and a Microsoft Research PhD Fellowship.
This was was carried out while TH was a PhD student at the University of British Columbia, during which time he was supported by a Microsoft Research PhD Fellowship.

\setstretch{1}
\bibliographystyle{plain}
\bibliography{unimodular}

\end{document}